\author{Pablo Su\'arez-Serrato and Rafael Torres}
\title[Collapsing, entropy, and Yamabe invariant of symplectic 4-manifolds]{A note on collapse, entropy, and vanishing of the Yamabe invariant of symplectic 4-manifolds}
\address{Instituto de Matem\'aticas - Universidad Nacional Aut\'onoma de M\'exico\\Circuito Exterior, Ciudad Universitaria\\Coyoac\'an, 04510\\Mexico City\\Mexico}
\email{pablo@im.unam.mx}
\address{Scuola Internazionale Superiori di Studi Avanzati\\ Via Bonomea 265\\34136\\Trieste\\Italy}
\email{rtorres@sissa.it}
\theoremstyle{plain}
\newtheorem{theorem}{Theorem}
\newtheorem{corollary}{Corollary}
\newtheorem{proposition}{Proposition}
\theoremstyle{definition}
\newtheorem{definition}{Definition}
\newcommand{\R}{\mathbb{R}}
\newcommand{\Z}{\mathbb{Z}}
\newcommand{\N}{\mathbb{N}}
\newcommand{\C}{\mathbb{C}}
\begin{document}

\begin{abstract} We make use of $\mathcal{F}$-structures and technology developed by Paternain - Petean to compute minimal entropy, minimal volume, and Yamabe invariant of symplectic 4-manifolds, as well as to study their collapse with sectional curvature bounded from below. \`A la Gompf, we show that these invariants vanish on symplectic 4-manifolds that realize any given finitely presented group as their fundamental group. We extend to the symplectic realm a result of LeBrun which relates the Kodaira dimension with the Yamabe invariant of compact complex surfaces. \end{abstract}

\maketitle

\section{Introduction and main results}

Let $(M, g)$ be a closed oriented Riemannian manifold. The minimal entropy $h(M)$ is defined as the infimum of the topological entropy of the geodesic flow of a smooth metric $g$ on $M$. The simplicial volume $||M||$ was defined by Gromov \cite{[G82]} as the infimum of $\Sigma_{i}|r_{i}|$ where $r_{i}$ are the coefficients of any \emph{real} singular cycle representing the fundamental class of $M$. It was introduced in relation to the {\em minimal volume}, which Gromov defined as the infimum of volumes of metrics whose sectional curvature is bounded between -1 and 1. Other types of minimal volumes may be considered, for example, ${\rm Vol}_K(M)$, which is the infimum of volumes of metrics with sectional curvature bounded {\em below} by -1. Due to rescalling properties of these invariants and in order to obtain interesting data, one assumes the normalization $Vol((M, g)) = 1$ of the volume.\\

The computation of these invariants can be challenging. The existence of a circle action, for example, implies that they all vanish. A generalization of a circle action is given by the notion of an $\mathcal{F}$-structure, as introduced by Cheeger and Gromov \cite{[CG1], [G82]} (Definiton \ref{Definition F}). In a series of papers, Cheeger-Gromov \cite{[CG1], [CG2]} and Paternain-Petean \cite{[PP1], [PP2], [PP3]} have shown that the existence of an $\mathcal{F}$-structure implies that $h(M)$, $||M||$, and ${\rm Vol}_K(M)$ are all zero.\\

Recall the minmax definition of the {\em Yamabe invariant} \cite{[Be], [Sc]}. Consider a conformal class \begin{equation}\gamma:= [g] = \{ug : M\overset{u}{\rightarrow} \R^+\}\end{equation} of Riemannian metrics on $M$. The Yamabe constant of $(M, \gamma)$ is defined as:\begin{equation} \mathcal{Y}(M, \gamma):= \underset{g\in \gamma}{\inf} \frac{\int_M {\rm Scal}_g d{\rm vol}_g}{({\rm Vol} (M, g))^{2/n}}.\end{equation}
 
Here ${\rm Scal}_g$ stands for the scalar curvature and $d{\rm vol}_g$ for the volume form that is associated to the smooth metric $g$. The Yamabe invariant of $M$ is then given by: \begin{equation} \mathcal{Y}(M):= \underset{\gamma}{\sup} \mathcal{Y}(M, \gamma). \end{equation} 

LeBrun \cite{[LeB]} and Paternain-Petean \cite{[PP2]} studied the value of these fundamental invariants of Riemannian manifolds on complex surfaces. Given the fundamental role that symplectic manifolds play in our understanding of 4-manifold topology, a goal of the present paper is to study the value of these invariants on the much larger class of symplectic 4-manifolds. The principal technical result is to equip constructions of symplectic 4-manifolds with an $\mathcal{F}$-structure.\\

In order to state our main results, we first recall the following definition \cite{[L1]}. Let $(M, \omega)$ be a minimal symplectic 4-manifold, and let $K_{\omega}$ denote its canonical bundle. The symplectic Kodaira dimension $kod(M, \omega)$ of $(M, \omega)$ is defined as:

\[kod(M, \omega) = \left\{
  \begin{array}{ccccr}
    -\infty & \quad{\rm if}\quad &  K_{\omega} \cdot [\omega]  < 0  & \quad{\rm or}\quad & K_{\omega} \cdot K_{\omega} < 0\\
    0 &  \quad{\rm if}\quad &  K_{\omega} \cdot [\omega] = 0 & \quad{\rm and}\quad &K_{\omega} \cdot K_{\omega} = 0\\
    1 & \quad{\rm if} \quad & K_{\omega} \cdot [\omega] > 0 & \quad{\rm and}\quad & K_{\omega} \cdot K_{\omega} = 0\\
    2 & \quad{\rm if} \quad   & K_{\omega} \cdot [\omega] > 0 & \quad{\rm and}\quad & K_{\omega}\cdot K_{\omega} > 0
  \end{array}
\right.
\]

The symplectic Kodaira dimension of a symplectic manifold is defined as the Kodaira dimension of its minimal model, and in the presence of a holomorphic structure, the holomorphic and the symplectic Kodaira dimensions coincide \cite{[L1], [DZ13]}.

\begin{theorem}{\label{Theorem Gen}} All known examples of closed symplectic 4-manifolds of symplectic Kodaira dimension at most one admit an $\mathcal{F}$-structure.

No closed symplectic 4-manifold of symplectic Kodaira dimension two admits an $\mathcal{F}$-structure.
\end{theorem}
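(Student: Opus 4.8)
The plan is to prove the two assertions by independent methods: the first by exhibiting $\mathcal{F}$-structures on a short list of building blocks together with a gluing principle that survives the standard cut-and-paste constructions, and the second by contradicting a Seiberg--Witten lower bound on ${\rm Vol}_K$. A guiding remark for the first assertion is that the $\mathcal{F}$-structures one needs are allowed to be of \emph{mixed type}, i.e. the rank of the local torus actions may drop to zero along a lower-dimensional stratum. This is essential: the Cheeger--Gromov obstruction $\chi(M)=0$ applies only to $\mathcal{F}$-structures of positive rank, so manifolds such as $\mathbb{CP}^2$, the Enriques surface, or $K3$ (all of nonzero Euler characteristic, hence all of $kod\le 1$) are \emph{not} excluded, while the vanishing of $h(M)$, $||M||$, and ${\rm Vol}_K(M)$ established by Paternain--Petean \cite{[PP1], [PP2], [PP3]} only requires collapse with curvature bounded from below, which a mixed structure still produces.

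For the base cases I would organize the known examples by Kodaira dimension. In the case $kod=-\infty$, the classification of minimal models (Liu, Li) gives either $\mathbb{CP}^2$ or an $S^2$-bundle over a surface $\Sigma_g$; the toric $T^2$-action on the former and the fibrewise $S^1$-rotation on the latter furnish mixed $\mathcal{F}$-structures whose rank drops only on the (lower-dimensional) fixed loci, so ${\rm Vol}_K=0$ by collapsing the circle or torus orbits. In the case $kod=0$ the (conjecturally complete) list of minimal models is $T^4$, the $T^2$-bundles over $T^2$ including the hyperelliptic surfaces, the Enriques surface, and $K3$; each carries a torus action or a (singular) torus fibration, and the fibre-translation action gives a structure of rank $2$ over the regular part, rank $1$ over fibres with Dehn-twist monodromy, and rank $0$ at the nodes. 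The case $kod=1$ (properly elliptic examples, the surfaces $E(n)$ and their log transforms $E(n)_{p,q}$, and the further fibre-sum constructions) is handled by the same torus-fibration mechanism.

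It remains to propagate these structures through the operations by which all documented $kod\le 1$ examples are built: symplectic (fibre) sum along tori, logarithmic transforms, knot and Luttinger surgery, and blow-up. Here I would prove a gluing lemma to the effect that each such operation is supported in a region of the form $T^2\times D^2$, $T^3\times I$, or a neighbourhood of an exceptional sphere, each of which carries a standard torus or circle action; one checks that these local actions match the $\mathcal{F}$-structures already present on the complementary pieces and thus assemble into a global (mixed) $\mathcal{F}$-structure. Since the classification of symplectic $4$-manifolds with $kod\le 1$ is not complete, the statement is necessarily phrased for \emph{all known examples}. The main obstacle in this half is exactly this gluing: extending the fibre-torus action across singular fibres and across the surgery tori requires tracking the monodromy so that the surviving circle or torus subgroup is globally well defined, and it is this monodromy-compatibility that must be verified case by case.

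For the second assertion, let $(M,\omega)$ have $kod=2$, with minimal model $(M_{\min},\omega_{\min})$ of general type, so $c_1^2=K_{\omega}\cdot K_{\omega}>0$ and $K_{\omega}\cdot[\omega]>0$. By Taubes the canonical $\mathrm{Spin}^c$ structure has nonvanishing Seiberg--Witten invariant, whence $\pm c_1$ is a monopole class; LeBrun's Seiberg--Witten curvature estimates \cite{[LeB]} then yield a \emph{strictly positive} lower bound, in terms of $c_1^2>0$, for the volume of any metric whose Ricci (a fortiori sectional) curvature is bounded below by $-1$. Hence ${\rm Vol}_K(M)>0$, and since LeBrun's bound persists under connected sum with copies of $\overline{\mathbb{CP}^2}$ this conclusion extends from $M_{\min}$ to every non-minimal representative of $kod=2$. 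On the other hand, the existence of an $\mathcal{F}$-structure forces ${\rm Vol}_K(M)=0$ by Paternain--Petean; the two statements are incompatible, so no such structure can exist. The delicate point here is the $b^{+}=1$ chamber, where the Seiberg--Witten count must be taken in the correct chamber, and the verification that LeBrun's inequality genuinely bounds ${\rm Vol}_K$ (curvature bounded below) rather than only the scalar-curvature functional $\int_M {\rm Scal}_g^2\, d{\rm vol}_g$.
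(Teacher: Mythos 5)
Your architecture for the first assertion coincides with the paper's: reduce to minimal models (blow-ups are absorbed by closure of $\mathcal{F}$-structures under connected sum, \cite[Theorem 5.9]{[PP1]}), organize the known examples by Kodaira dimension, put explicit structures on a short list of building blocks, and propagate them through symplectic sums and Luttinger/logarithmic surgeries. Your remark that fixed points must be allowed, so that $\chi(M)\neq 0$ is no obstruction, is also correct. The genuine gap is that the step carrying all of the content --- your ``gluing lemma'' --- is asserted rather than proved, and it is not automatic: the local circle actions on the two sides of a splitting $T^3$ do \emph{not} match on the nose. What the paper actually does (Proposition \ref{Proposition Luttinger}, the proof of Theorem \ref{Theorem Prescribed}, and Proposition \ref{Proposition BL}, following \cite{[SS09]}) is replace the gluing diffeomorphism by an isotopic affine transformation $A$ of the $T^3$-boundary and then verify the commutation $A^{-1}\tau A\,\sigma = \sigma\,A^{-1}\tau A$ of the conjugated actions, which is exactly the overlap compatibility that Definition \ref{Definition F} demands. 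Relatedly, your base case for elliptic surfaces is not the construction that works: fibre translation does not extend as an action over a nodal fibre, and while the monodromy-invariant vanishing-cycle direction suggests a candidate circle action, making this precise is nontrivial. The structure the paper uses on $E(1)$ (and hence on $E(n)$) is Paternain--Petean's Kummer-type one: $E(1)=\tilde{S}/\tilde{J}$ for an involution $\tilde{J}$ on a modification $\tilde{S}$ of $T^2\times S^2$, with circle actions inserted near the eight fixed loci and on the fibres over the poles \cite{[PP1]}. So for the first assertion you have the right plan, but its crucial verification --- which is the paper's main technical work --- is missing.

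For the second assertion your route is correct and differs only superficially from the paper's. You contradict $Vol_K(M)=0$ (from Theorem \ref{Theorem PP}) with a positive lower bound on $Vol_K$ coming from LeBrun's Seiberg--Witten estimates; the paper instead contradicts the sign of the Yamabe invariant, namely $\mathcal{Y}(M)\geq 0$ from Theorem \ref{Theorem PP} versus $\mathcal{Y}(M)\leq -4\pi\sqrt{2c_1^2}<0$ from \cite{[LeB0]}. These are equivalent formulations, since $\mathcal{Y}(M)\geq 0$ is the same as $Vol_{Scal}(M)=0$ (\cite[Proposition 5]{[LeB]}) and $Vol_K\geq Vol_{Ric}\geq Vol_{Scal}$, a chain the paper also records; this chain likewise answers your worry that the estimate might only bound $\int_M Scal_g^2\,dvol_g$. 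Your attention to blow-ups is well placed and in fact slightly more scrupulous than the paper: connected-sum closure of $\mathcal{F}$-structures reduces the \emph{existence} statement to minimal models, but it points the wrong way for a \emph{non-existence} statement, so for non-minimal $M$ one genuinely needs LeBrun's estimate in terms of $c_1^2$ of the minimal model, as you say. Finally, the $b^+=1$ chamber concern is handled by Taubes' theorem, which supplies the monopole class in the relevant chamber for every metric; this is the sense in which the paper invokes non-vanishing of the Seiberg--Witten invariant ``for every Riemannian metric.''
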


\begin{corollary}{\label{Corollary Gen}} Every known closed symplectic 4-manifold of Kodaira dimension at most one has zero minimal entropy, zero minimal volume, and it collapses with sectional curvature bounded from below. 

Symplectic 4-manifolds of Kodaira dimension two do not collapse with bounded scalar curvature.
\end{corollary}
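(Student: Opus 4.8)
The plan is to deduce the two assertions from genuinely different inputs. The first assertion is essentially a restatement of Theorem \ref{Theorem Gen} combined with the results quoted in the Introduction. By the first part of Theorem \ref{Theorem Gen}, every known closed symplectic 4-manifold $M$ of symplectic Kodaira dimension at most one carries an $\mathcal{F}$-structure. I would then invoke directly the theorems of Cheeger-Gromov \cite{[CG1], [CG2]} and Paternain-Petean \cite{[PP1], [PP2], [PP3]}: the existence of an $\mathcal{F}$-structure forces the minimal entropy $h(M)$ and the minimal volume to vanish, and forces $\mathrm{Vol}_K(M)=0$, the latter being by definition the statement that $M$ collapses with sectional curvature bounded from below. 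Nothing further is needed here.

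For the second assertion I emphasize that the non-existence of an $\mathcal{F}$-structure, which is the content of the second part of Theorem \ref{Theorem Gen}, does \emph{not} by itself preclude collapse with bounded scalar curvature: a scalar curvature bound is far weaker than the two-sided sectional curvature control attached to the Cheeger-Gromov theory, so a collapsing sequence could a priori exist even without an $\mathcal{F}$-structure. A gauge-theoretic obstruction is therefore required. The plan is to use the Seiberg-Witten scalar curvature estimate of LeBrun \cite{[LeB]}. A symplectic 4-manifold of Kodaira dimension two is, on its minimal model $M_{\min}$, of general type with $c_1^2(M_{\min}) = K_{\omega}\cdot K_{\omega} > 0$, and by Taubes' theorem it carries a non-trivial Seiberg-Witten invariant. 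LeBrun's estimate then yields, for \emph{every} Riemannian metric $g$ on $M$, the strictly positive lower bound
\begin{equation}
\int_M \mathrm{Scal}_g^2 \, d\mathrm{vol}_g \;\geq\; 32\pi^2\, c_1^2(M_{\min}) \;>\; 0,
\end{equation}
which is independent of $g$.

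The contradiction is then immediate. Suppose $M$ collapses with bounded scalar curvature, so that there is a constant $C$ and a sequence of metrics $g_i$ with $\sup_M |\mathrm{Scal}_{g_i}| \leq C$ and $\mathrm{Vol}(M, g_i)\to 0$. The pointwise bound gives
\begin{equation}
\int_M \mathrm{Scal}_{g_i}^2 \, d\mathrm{vol}_{g_i} \;\leq\; C^2 \, \mathrm{Vol}(M, g_i) \;\longrightarrow\; 0,
\end{equation}
which contradicts the fixed positive lower bound of the previous display. Hence no such collapsing sequence exists.

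I expect the main subtlety to be the bookkeeping around minimality and blow-ups. LeBrun's sharp lower bound is expressed through the canonical class of the minimal model $M_{\min}$ rather than through that of $M$ itself, so I would take care to invoke the form of the estimate valid for an arbitrary (not necessarily minimal) symplectic 4-manifold with $b^+ > 1$, whose right-hand side is $32\pi^2\, c_1^2(M_{\min})$. Once that is in place, the positivity of $K_{\omega}\cdot K_{\omega}$ built into the definition of Kodaira dimension two supplies the strict inequality and completes the argument.
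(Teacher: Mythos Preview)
Your proposal is correct and follows essentially the same route as the paper. For Kodaira dimension at most one the paper likewise feeds Theorem~\ref{Theorem Gen} into the Paternain--Petean machinery (Theorem~\ref{Theorem PP}); for Kodaira dimension two the paper shows $\mathrm{Vol}_{Scal}(M)>0$ by passing through LeBrun's Yamabe-constant formalism, whereas you argue by direct contradiction with a collapsing sequence, but both arguments rest on the identical Seiberg--Witten estimate $\int_M \mathrm{Scal}_g^2\, d\mathrm{vol}_g \geq 32\pi^2 c_1^2(M_{\min})>0$ from \cite{[LeB0]}, so the substance is the same.
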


LeBrun \cite{[LeB]} found an explicit relation between the holomorphic Kodaira dimension and the Yamabe invariant of compact complex surfaces. We extend his result to the symplectic realm. 

\begin{theorem}{\label{Theorem Kod}} Let $(M, \omega)$ be a closed symplectic 4-manifold whose existence is known at the time this paper was written. The following holds.
\[ \mathcal{Y}(M)= \left\{
  \begin{array}{cccr}
     > 0 & \quad {\rm if} \quad & kod(M, \omega) = &  -\infty\\
    0 & \quad {\rm if}\quad & kod(M, \omega) = & 0\\
    0 & \quad {\rm if}\quad  & kod(M, \omega) = & 1\\
    <0 & \quad {\rm if}\quad  & kod(M, \omega) = & 2
  \end{array}
\right.
\]
\end{theorem}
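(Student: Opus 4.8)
The plan is to split the classification into a non-negativity statement valid whenever $kod(M,\omega)\le 1$, and sign-refinements coming from the positive scalar curvature obstruction and from Seiberg--Witten curvature estimates. Concretely, I would reduce the theorem to three inputs: (i) the existence of an $\mathcal{F}$-structure on every known symplectic $4$-manifold with $kod\le 1$, which is exactly Theorem \ref{Theorem Gen}; (ii) Taubes' theorem that a closed symplectic $4$-manifold carries a non-trivial Seiberg--Witten basic class, namely the canonical class $K_\omega$, together with the classification identifying $kod=-\infty$ with the rational-or-ruled case; and (iii) LeBrun's scalar-curvature estimates \cite{[LeB]} for manifolds admitting a monopole class.

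First, the lower bound. If $kod(M,\omega)\le 1$ then by Theorem \ref{Theorem Gen} the manifold $M$ carries an $\mathcal{F}$-structure, so by Cheeger--Gromov and Paternain--Petean \cite{[CG1],[PP1]} one has ${\rm Vol}_K(M)=0$; that is, there is a sequence of metrics $g_\varepsilon$ with $\sec(g_\varepsilon)\ge -1$ and ${\rm Vol}(M,g_\varepsilon)=\varepsilon\to 0$. Since $\sec\ge -1$ forces ${\rm Scal}_{g_\varepsilon}\ge -n(n-1)=-12$, rescaling to unit volume $\tilde g_\varepsilon:=\varepsilon^{-1/2}g_\varepsilon$ yields ${\rm Scal}_{\tilde g_\varepsilon}\ge -12\,\varepsilon^{1/2}$. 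A standard H\"older estimate on the conformal Laplacian then gives $\mathcal{Y}(M,[\tilde g_\varepsilon])\ge -12\,\varepsilon^{1/2}$, so letting $\varepsilon\to 0$ forces $\mathcal{Y}(M)\ge 0$. This settles the sign $\ge 0$ in the three cases $kod\in\{-\infty,0,1\}$ and explains why Theorem \ref{Theorem Gen} is proved first.

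Next, the upper bounds. When $kod(M,\omega)=-\infty$ the classification shows $M$ is rational or ruled, hence diffeomorphic to an iterated blow-up of $\mathbb{CP}^2$ or of an $S^2$-bundle over a surface; each such manifold admits a metric of positive scalar curvature (shrink the $S^2$-factor; blow-ups preserve this by Gromov--Lawson/Schoen--Yau surgery), so $\mathcal{Y}(M)>0$. When $kod(M,\omega)\ge 0$, Taubes' theorem produces a non-trivial Seiberg--Witten invariant, which obstructs positive scalar curvature (in the $b^+=1$ cases one uses non-vanishing in the symplectic chamber, as in \cite{[LeB]}), whence $\mathcal{Y}(M)\le 0$. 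Combined with the preceding paragraph this gives $\mathcal{Y}(M)=0$ for $kod\in\{0,1\}$ and $\mathcal{Y}(M)>0$ for $kod=-\infty$.

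Finally, for $kod(M,\omega)=2$ the minimal model satisfies $c_1^2(M_{\min})=K_\omega\cdot K_\omega>0$. Here Theorem \ref{Theorem Gen} guarantees no $\mathcal{F}$-structure exists, and instead I would invoke LeBrun's Seiberg--Witten estimate: the canonical monopole class forces $\int_M {\rm Scal}_g^2\,d{\rm vol}_g\ge 32\pi^2\,c_1^2(M_{\min})$ for every metric $g$, so that $\mathcal{Y}(M)\le -4\pi\sqrt{2\,c_1^2(M_{\min})}<0$. I expect the main obstacle to lie precisely in this last step, together with the $\mathcal{Y}\le 0$ claim for $kod\ge 0$: one must transport LeBrun's estimates, originally stated for K\"ahler surfaces, into the purely symplectic setting, which is exactly where Taubes' identification of the basic class with $K_\omega$ and the careful handling of the $b^+=1$ chamber do the work. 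Unlike the K\"ahler case, I would claim only the \emph{sign} of $\mathcal{Y}$ and not its exact value, since the sharp equality in \cite{[LeB]} relies on K\"ahler--Einstein metrics that are unavailable here.
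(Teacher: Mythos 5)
Your proposal is correct and follows essentially the same route as the paper: the $\mathcal{F}$-structure of Theorem \ref{Theorem Gen} gives collapse and hence $\mathcal{Y}(M)\geq 0$ (the paper packages your rescaling argument as LeBrun's ``$Vol_{Scal}(M)=0$ iff $\mathcal{Y}(M)\geq 0$'' together with Theorem \ref{Theorem PP}), Taubes' nonvanishing Seiberg--Witten invariant rules out positive scalar curvature for $kod\geq 0$ (Proposition \ref{Proposition ZeroYamabe}), Liu's rational-or-ruled classification handles $kod=-\infty$, and LeBrun's curvature estimate $\mathcal{Y}(M)\leq -4\pi\sqrt{2c_1^2(M)}$ settles $kod=2$. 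The only cosmetic difference is that you organize the argument as lower bound plus upper bound rather than case-by-case in the Kodaira dimension.
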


No symplectic 4-manifold of Kodaira dimension one realizes its Yamabe invariant, and neither do several symplectic 4-manifolds of Kodaira dimension zero. This has been previously observed for K\"ahler surfaces by LeBrun \cite{[LeB]}, and on the homeomorphism type of the K3 surface by the second author of this note \cite{[To]}. A result of Hitchin \cite{[Hi]} is fundamental for these purposes.\\

A classical result of Gompf \cite{[Go]} states that any finitely presented group can be the fundamental group of a symplectic 4-manifold. Building on a construction of Baldridge-Kirk \cite{[BK0]}, our third main result addresses the value of these invariants for the manifolds that they constructed in their paper. The precise statement is as follows.

\begin{theorem}{\label{Theorem Prescribed}} Let $G$ be a finitely presented group. There exists a minimal symplectic 4-manifold $M(G)$ with fundamental group $\pi_1(M(G)) \cong G$ and such that $M(G)$ admits an $\mathcal{F}$-structure. Consequently, $$h(M(G)) = 0 = ||M(G)|| = Vol_K(M(G))$$
and $M(G)$ collapses with sectional curvature bounded from below.

Moreover, the Yamabe invariant satisfies \begin{equation}\mathcal{Y}(M(G)) = 0\end{equation} and it is not realized, i.e., there exist no scalar-flat Riemannian metrics on $M(G)$.

\end{theorem}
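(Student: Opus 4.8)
The plan is to construct $M(G)$ via the Baldridge--Kirk procedure \cite{[BK0]} and then run the same three-part analysis used for Theorems \ref{Theorem Gen} and \ref{Theorem Kod}: first produce an $\mathcal{F}$-structure, then read off the vanishing of $h$, $||M||$, $Vol_K$ together with the collapse, and finally pin down the Yamabe invariant and its non-realization through Seiberg--Witten theory. I would first recall that Baldridge--Kirk realize an arbitrary finitely presented group $G$ as the fundamental group of a \emph{minimal} symplectic 4-manifold assembled from standard pieces --- products of the form $(\Sigma \setminus \coprod D^2) \times T^2$ together with symplectic and Lagrangian torus neighborhoods $T^2 \times D^2$ --- glued by fiber sums and Luttinger surgeries that encode the generators and relations of $G$. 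One arranges the construction so that $M(G)$ has $b^+ \ge 2$ and symplectic Kodaira dimension one, i.e. $K_\omega \cdot K_\omega = 0$ and $K_\omega \cdot [\omega] > 0$.

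The central step is to equip $M(G)$ with an $\mathcal{F}$-structure. On each product piece the evident $T^2$-action (or at least a free $S^1$-action on the torus factor) provides a local polarized $\mathcal{F}$-structure, and on each surgery region the residual circle action transverse to the surgery torus does the same. I would then verify that these local torus actions are compatible on overlaps in the sense of Definition \ref{Definition F}, exactly as in the proof of Theorem \ref{Theorem Gen}, so that they assemble into a global $\mathcal{F}$-structure on $M(G)$. I expect this gluing to be the main obstacle: one must check that the Luttinger surgeries can be arranged to respect a common circle direction across all of the pieces, and this is the one point at which the combinatorial data of $G$ genuinely enters the geometry. Granting this, the vanishing $h(M(G)) = ||M(G)|| = Vol_K(M(G)) = 0$ and the collapse with sectional curvature bounded from below follow at once from the Cheeger--Gromov and Paternain--Petean theorems quoted in the introduction, precisely as in Corollary \ref{Corollary Gen}.

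For the Yamabe invariant I would squeeze from both sides. The collapse with bounded sectional curvature furnishes metrics $g_\epsilon$ with $|{\rm Scal}_{g_\epsilon}| \le C$ and ${\rm Vol}(M(G), g_\epsilon) \to 0$, so that $\int_M {\rm Scal}_{g_\epsilon}^2\, d{\rm vol}_{g_\epsilon} \le C^2\,{\rm Vol}(M(G),g_\epsilon) \to 0$; since in dimension four $\mathcal{Y}(M) = -\big(\inf_g \int_M {\rm Scal}_g^2\, d{\rm vol}_g\big)^{1/2}$ whenever $\mathcal{Y}(M) \le 0$, this forces $\mathcal{Y}(M(G)) \ge 0$. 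For the reverse inequality, $M(G)$ is minimal symplectic with $b^+ \ge 2$, so by Taubes its canonical spin${}^c$ structure has non-trivial Seiberg--Witten invariant; hence $M(G)$ carries no metric of positive scalar curvature and $\mathcal{Y}(M(G)) \le 0$. Combining the two bounds gives $\mathcal{Y}(M(G)) = 0$.

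Finally, to see that this value is not attained I would argue by contradiction. A Yamabe metric at level $0$ would be scalar-flat. Invoking the Seiberg--Witten curvature estimate of LeBrun \cite{[LeB]} together with Hitchin's theorem \cite{[Hi]}, a scalar-flat metric carrying the canonical monopole class must in fact be K\"ahler, so $M(G)$ would admit a scalar-flat K\"ahler metric with K\"ahler class $[\omega_g]$. But scalar-flatness of a K\"ahler metric forces $K \cdot [\omega_g] = 0$, whereas the identification of the symplectic and complex Kodaira dimensions \cite{[L1], [DZ13]} together with $kod(M(G),\omega) = 1$ gives $K \cdot [\omega_g] > 0$, a contradiction. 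Hence no scalar-flat metric exists and $\mathcal{Y}(M(G)) = 0$ is not realized.
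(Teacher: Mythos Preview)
Your overall strategy is the paper's strategy, but the core technical step---building the $\mathcal{F}$-structure---has a real gap. The Baldridge--Kirk manifold used here is \emph{not} assembled solely from surface-times-torus pieces and Luttinger surgeries: in \cite{[BK0]} one writes $M(G)$ as a symplectic sum of $X=Y\times S^1$ (with $Y$ a fibered 3-manifold) with $g+r+1$ copies of the rational elliptic surface $E(1)$, glued along regular torus fibers. The $X$ block does carry the obvious free circle action, but $E(1)$ is simply connected and admits no free $S^1$-action; the $\mathcal{T}$-structure on each $E(1)$ block is the nontrivial one constructed by Paternain--Petean \cite[Theorem~5.10]{[PP1]} from the elliptic fibration (involving the Kummer-type quotient of $T^2\times S^2$ and the blown-up local models). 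The paper's work is to check that this $\mathcal{T}$-structure and the rotation action on $X$ commute on the common $T^3$-boundaries after replacing the gluing maps by isotopic affine transformations, as in \cite{[SS09]}. Your proposal, by describing the building blocks as $(\Sigma\setminus\coprod D^2)\times T^2$ with Luttinger surgeries, sidesteps exactly the piece where the structure is delicate; as written, it does not produce an $\mathcal{F}$-structure on $M(G)$.

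For the Yamabe portion your squeeze argument is fine and matches the paper. Your non-realization argument, however, takes a different route and is shakier than the paper's. The paper argues: a scalar-flat metric on a manifold with no positive-scalar-curvature metric is Ricci-flat by Kazdan--Warner \cite[4.35]{[Be]}; since $c_2(M(G))=12(g+r+1)$ and $\sigma(M(G))=-8(g+r+1)$ one has equality $\chi=\tfrac{3}{2}|\sigma|$ in the Hitchin--Thorpe inequality, and Hitchin's rigidity \cite[6.37]{[Be]} then forces $M(G)$ to be flat, $K3$, Enriques, or a quotient thereof---all of Kodaira dimension zero---contradicting $kod(M(G),\omega)=1$. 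Your version instead asserts that LeBrun's Seiberg--Witten estimate together with ``Hitchin's theorem'' forces a scalar-flat metric with a monopole class to be K\"ahler; that implication is not immediate from \cite{[LeB]} (LeBrun's scalar curvature estimate only gives $\int s^2\geq 32\pi^2 c_1^2$, which vanishes here), and in any case you would still need the Kazdan--Warner step to get Ricci-flatness before any rigidity kicks in. The paper's argument is both shorter and on firmer ground.
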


The organization of the paper is as follows. Section \ref{Section F} contains the definition of our main technical tool and result. Interesting constructions of $\mathcal{F}$-structures are given in Section \ref{Section F}, Section \ref{Section Log}, Section \ref{Section Zero}, and Section \ref{Section One}. The strategy and results used to show the vanishing and non-realization of the Yamabe invariant are described in Section \ref{Section YI}. Theorem \ref{Theorem Prescribed} is proven in Section \ref{Section Proof}. Theorem \ref{Theorem Gen}, Corollary \ref{Corollary Gen}, and Theorem \ref{Theorem Kod} are proven by cases in terms of Kodaira dimensions in Section \ref{Section ProofKod}, where each case is addressed in a subsection.

\subsection{Acknowledgements} R. T. thanks Claudio Arezzo, Steven Frankel, Baptiste, Morin, and Stefano Vidussi for useful conversations. He gratefully acknowledges the Erwin Schr\"odinger International Institute for Mathematical Physics for its support and hospitality during the writing of the manuscript.  PSS thanks CONACyT Mexico and PAPIIT UNAM for supporting various research activities. Both authors would like to thank Jimmy Petean for interesting conversations, and Weiyi Zhang for his interest in our paper.

\section{Collapse, $\mathcal{F}$-structures, and Yamabe invariant}

\subsection{$\mathcal{F}$-structures and collapse}{\label{Section F}} The concept of an $\mathcal{F}$-structure was introduced by Gromov \cite{[G82]}  in terms of sheaves. We will work through out the manuscript with the equivalent definition given in Paternain-Petean \cite[Section 2]{[PP3]}. 

\begin{definition}{\label{Definition F}}An \emph{$\mathcal{F}$-structure} on a smooth closed manifold $M$ is given by 
\begin{itemize}
\item a finite open cover $\{U_1, \ldots, U_N\}$ of $M$;
\item a finite Galois covering $\pi_i: \widetilde{U_i}\rightarrow U_i$ with $\Gamma_i$ a group of deck transformations for $1\leq i \leq N$;
\item a smooth effective torus action with finite kernel of a $k_i$-dimensional torus 
\begin{equation}
\phi_i: T^{k_i}\rightarrow Diff(\widetilde{U_i})
\end{equation}
for $1\leq i \leq N$;
\item a representation $\Phi_i: \Gamma_i \rightarrow Aut(T^{k_i})$ such that
\begin{equation}
\gamma(\phi_i(t)(x)) = \phi_i(\Gamma_i(\gamma)(t))(\gamma x)
\end{equation}
for all $\gamma \in \Gamma_i$, $t\in T^{k_i}$, and $x\in \widetilde{U}_i$;
\item for any subcollection $\{U_{i_1}, \ldots, U_{i_l}\}$ that satisfies $U_{i_1\cdots i_l}:= U_{i_1} \cap \cdots \cap U_{i_l} \neq \emptyset$, the following compatibility condition holds: let $\widetilde{U}_{i_1\cdots i_l}$ be the set of all points $(x_{i_1}, \ldots, x_{i_l}) \in \widetilde{U}_{i_1}\times \cdots \times \widetilde{U}_{i_l}$ such that $\pi_{i_1}(x_{i_1}) = \cdots = \pi_{i_l}(x_{i_l})$. The set $\widetilde{U}_{i_1\cdots i_l}$ covers $\pi^{-1}(U_{i_1\cdots i_l}) \subset \widetilde{U}_{i_1\cdots i_l}$ for all $1\leq j \leq l$. It is required that $\phi_{i_j}$ leaves $\pi^{-1}_{i_j}(U_{i_1\cdots i_l})$ invariant, and it lifts to an action on $\widetilde{U}_{i_1\cdots i_l}$ such that all lifted actions commute.
\item An $\mathcal{F}$-structure is called a \emph{$\mathcal{T}$-structure} if the Galois coverings $\pi_i: \widetilde{U}_i \rightarrow U_i$  can be taken to be trivial for every $i$.
\end{itemize}
\end{definition}

The reader is directed towards \cite{[CG1], [CG2], [PP1], [PP2], [PP3], [SS09], [PSS], [To]} for interesting constructions of $\mathcal{T}$-structures. The key ingredient that we use to prove our main results is the following theorem, which explains a motivation to equip manifolds with a $\mathcal{F}$-structure.

\begin{theorem}{\label{Theorem PP}} Paternain-Petean \cite{[PP1]}. If a closed n-manifold $M$ admits an $\mathcal{F}$-structure, then \begin{equation} h(M) = 0 = ||M|| = Vol_K(M), \end{equation} and it collapses with sectional curvature bounded from below.

If $n\geq 3$, then the Yamabe invariant of $M$ satisfies \begin{equation}\mathcal{Y}(M) \geq 0.\end{equation}
\end{theorem}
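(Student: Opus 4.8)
The plan is to reduce the entire statement to the construction of a single collapsing sequence and then read off each invariant from soft comparison estimates. Concretely, the first step is to produce from the $\mathcal{F}$-structure a sequence of Riemannian metrics $g_\epsilon$ on $M$ with sectional curvature uniformly bounded from below, say $\sec_{g_\epsilon} \geq -1$, and with $\mathrm{Vol}(M, g_\epsilon) \to 0$. This is the technical heart, and it is where I expect essentially all of the difficulty to lie. The idea is that each torus action $\phi_i : T^{k_i} \to \mathrm{Diff}(\widetilde{U}_i)$ has positive rank, so on $\widetilde{U}_i$ one writes an invariant metric and shrinks it in the orbit directions; the representations $\Phi_i$ and the compatibility condition that all lifted actions commute on the overlaps $\widetilde{U}_{i_1\cdots i_l}$ are exactly what is needed to glue the locally collapsed, $\Gamma_i$-invariant metrics into a global metric on $M$. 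Keeping the curvature bounded from below while the volume degenerates is the delicate part, and is supplied by the Cheeger--Gromov theory of collapse together with its refinement by Paternain--Petean to general, not necessarily polarized, $\mathcal{F}$-structures. I would take this collapsing sequence as the single input and deduce everything else from it.

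Given the sequence $g_\epsilon$, the vanishing of $Vol_K(M)$ and the collapse statement are immediate: the $g_\epsilon$ are by construction metrics with $\sec \geq -1$ and volume tending to $0$, which is precisely the definition of collapsing with sectional curvature bounded from below and forces $Vol_K(M) = 0$. For the simplicial volume I would invoke Gromov's main inequality: a lower sectional bound $\sec_{g_\epsilon} \geq -1$ gives $\mathrm{Ric}_{g_\epsilon} \geq -(n-1)$, whence $||M|| \leq C(n)\,\mathrm{Vol}(M, g_\epsilon)$ for a dimensional constant $C(n)$, and letting $\epsilon \to 0$ yields $||M|| = 0$.

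For the minimal entropy I would use that a lower sectional bound controls the topological entropy of the geodesic flow from above: if $\sec_g \geq -a^2$, then Jacobi fields orthogonal to a geodesic grow at most like $e^{at}$ by Rauch comparison, so $h_{\mathrm{top}}(g) \leq (n-1)a$. Rescaling the collapsing metrics to unit volume, $\tilde g_\epsilon = v_\epsilon^{-2/n} g_\epsilon$ with $v_\epsilon = \mathrm{Vol}(M, g_\epsilon)$, turns the bound $\sec_{g_\epsilon} \geq -1$ into $\sec_{\tilde g_\epsilon} \geq -v_\epsilon^{2/n}$. Applying the entropy estimate with $a = v_\epsilon^{1/n}$ gives $h_{\mathrm{top}}(\tilde g_\epsilon) \leq (n-1)\, v_\epsilon^{1/n} \to 0$; since the $\tilde g_\epsilon$ have unit volume, this shows $h(M) = 0$.

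Finally, for the Yamabe invariant when $n \geq 3$, I would bound the Yamabe constant of each conformal class $[g_\epsilon]$ from below using only the negative part $\mathrm{Scal}^- := \max(-\mathrm{Scal}, 0)$ of the scalar curvature. Discarding the nonnegative gradient term in the Yamabe functional (here $a_n = 4(n-1)/(n-2) > 0$) and applying Hölder with conjugate exponents $n/2$ and $n/(n-2)$ gives, for every test function $u$,
\[
\frac{\int_M \big(a_n |\nabla u|^2 + \mathrm{Scal}_{g_\epsilon}\, u^2\big)\, d\mathrm{vol}_{g_\epsilon}}{\big(\int_M u^{2n/(n-2)}\, d\mathrm{vol}_{g_\epsilon}\big)^{(n-2)/n}} \;\geq\; -\Big(\int_M (\mathrm{Scal}_{g_\epsilon}^-)^{n/2}\, d\mathrm{vol}_{g_\epsilon}\Big)^{2/n},
\]
so that $\mathcal{Y}(M, [g_\epsilon]) \geq -\big(\int_M (\mathrm{Scal}_{g_\epsilon}^-)^{n/2}\, d\mathrm{vol}_{g_\epsilon}\big)^{2/n}$. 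The lower sectional bound gives $\mathrm{Scal}_{g_\epsilon} \geq -n(n-1)$, hence $\mathrm{Scal}_{g_\epsilon}^- \leq n(n-1)$ is bounded, so the right-hand side is at most a constant times $\mathrm{Vol}(M, g_\epsilon)^{2/n} \to 0$. Since $\mathcal{Y}(M) = \sup_\gamma \mathcal{Y}(M, \gamma) \geq \mathcal{Y}(M, [g_\epsilon])$ for every $\epsilon$, letting $\epsilon \to 0$ yields $\mathcal{Y}(M) \geq 0$; the restriction $n \geq 3$ is exactly what makes the conformal exponent $2n/(n-2)$ and this argument meaningful. The main obstacle remains the first step --- engineering the curvature-controlled collapse from the combinatorics of the $\mathcal{F}$-structure --- while the four deductions above are comparatively formal.
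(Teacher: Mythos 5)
This statement is not proved in the paper at all: it is imported verbatim from Paternain--Petean \cite{[PP1]} and used as a black box (the only fragment the paper itself supplies is the derivation of $\mathcal{Y}(M)\geq 0$ from $Vol_K(M)=0$ in Section \ref{Section YI}, via LeBrun's Proposition 5). So the comparison is really between your outline and the proof in \cite{[PP1]}. Your four deductions from a collapsing sequence are correct and are essentially the ones Paternain--Petean use: $Vol_K(M)=0$ and the collapse statement are definitional once the sequence exists; $\|M\|=0$ follows from Gromov's isolation inequality $\|M\|\leq C(n)\,\mathrm{Vol}(M,g)$ under $\mathrm{Ric}\geq -(n-1)$; the entropy bound $h_{\mathrm{top}}(g)\leq (n-1)a$ for $\sec\geq -a^2$ (Rauch comparison on Jacobi fields plus Ruelle's inequality, using that the positive Lyapunov exponents of the geodesic flow number at most $n-1$) combined with the rescaling $\tilde g_\epsilon=v_\epsilon^{-2/n}g_\epsilon$ gives $h(M)=0$; and your H\"older estimate $\mathcal{Y}(M,[g])\geq -\|\mathrm{Scal}_g^-\|_{L^{n/2}}$ is a clean direct substitute for the chain $Vol_K=0\Rightarrow Vol_{Scal}=0\Leftrightarrow \mathcal{Y}\geq 0$ that the paper quotes from LeBrun.

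The genuine gap is exactly where you flag it, and it is not a small one: you assume the existence of the sequence $g_\epsilon$ with $\sec_{g_\epsilon}\geq -1$ and $\mathrm{Vol}(M,g_\epsilon)\to 0$, deferring it to ``Cheeger--Gromov together with its refinement by Paternain--Petean'' --- but that existence statement \emph{is} the theorem being proved (it is literally the clause ``collapses with sectional curvature bounded from below''), so as written the argument is circular at its core. The sketch you give (shrink an invariant metric in the orbit directions on each $\widetilde U_i$ and glue using the compatibility conditions) is the easy, polarized case; the difficulty in \cite{[PP1]} is that a general $\mathcal{F}$-structure need not admit a polarization, the orbit dimension jumps between charts, and naive shrinking near the locus where the local torus actions have orbits of different dimensions destroys the lower curvature bound. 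Paternain--Petean handle this with explicit local models and a careful interpolation on the overlaps, and that construction --- not the four corollaries --- is the content of the theorem. To make your proposal a proof one would have to either carry out that construction or honestly cite it as the statement itself, in which case only the entropy, simplicial volume, and Yamabe deductions remain as your contribution.
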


Every $\mathcal{F}$-structure that is constructed in this manuscript is a $\mathcal{T}$-structure.

\subsection{Logarithmic transformations and polarized $\mathcal{T}$-structures}{\label{Section Log}} Let $T$ be a 2-torus of self-intersection zero that is contained inside a 4-manifold $M$. The tubular neighborhood $\nu(T)$ is diffeomorphic to $T^2\times D^2$. A logarithmic transfomation is the procedure of deleting the submanifold $\nu (T)$ from $M$ and gluing back $T^2\times D^2$  \begin{equation}(M - \nu(T)) \cup_{\varphi} (T^2\times D^2)\end{equation} using a diffeomorphism $\varphi: T^2\times \partial D^2$ to identify the common boundaries (see \cite{[BK0], [CL]} for more details). In what follows, $\Sigma_g$ denotes a closed oriented surface of genus $g\in \N$.

\begin{proposition}{\label{Proposition Luttinger}} There exists a 4-manifold $M_g(i)$ that is obtained through an application of $i\in \{0, 1, \ldots, 2g\}$ logarithmic transformations to $T^2\times \Sigma_g$ along homologically essential Lagrangian tori taken with respect to the symplectic form  $\pi^{\ast}\omega_{T^2}\oplus \pi^{\ast}\omega_{\Sigma_g}$, which admits a polarized $\mathcal{T}$-structure for all choices of $i$ and $g\in \N$. Moreover,  $b_1(M_g(i)) = 2 + 2g - i$, and $c_2(M_g(i)) = 0 = \sigma(M_g(i))$.
\end{proposition}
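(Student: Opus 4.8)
The plan is to realize $M_g(i)$ by equivariant Luttinger surgery on $T^2 \times \Sigma_g$ and then exhibit a global free circle action as the desired polarized $\mathcal{T}$-structure. Write $T^2 = S^1_x \times S^1_y$ and let $a_1, b_1, \ldots, a_g, b_g \subset \Sigma_g$ be a standard symplectic basis of simple closed curves. For each such curve $c$ I would take the torus $T_c := (S^1_x \times \{q_c\}) \times c$ with $q_c \in S^1_y$. Since $\omega = \pi^{\ast}\omega_{T^2} \oplus \pi^{\ast}\omega_{\Sigma_g}$ splits along the two factors, the two tangent directions of $T_c$ lie in different summands and pair trivially, so $T_c$ is isotropic and hence Lagrangian; it is homologically essential because $[S^1_x] \neq 0$ and $[c] \neq 0$. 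Letting the base point $q_c$ depend on $c$ separates the tori in the $S^1_y$-direction, so that even curves $a_j, b_j$ meeting in $\Sigma_g$ produce \emph{disjoint} tori in $T^2 \times \Sigma_g$ (a common point would force $q_{a_j} = q_{b_j}$). This yields up to $2g$ pairwise disjoint Lagrangian tori, each containing the common circle factor $S^1_x$, and $M_g(i)$ is obtained by Luttinger surgery along $i$ of them.

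Next I would compute the invariants. Each logarithmic transformation removes a copy of $\nu(T_c) \cong T^2 \times D^2$ along $\partial \nu(T_c) = T^3$ and reglues another copy of $T^2 \times D^2$. Since $\chi(T^2 \times D^2) = 0 = \chi(T^3)$, inclusion–exclusion gives $\chi(M_g(i)) = \chi(T^2 \times \Sigma_g)$; since $\sigma(T^2 \times D^2) = 0$, Novikov additivity gives $\sigma(M_g(i)) = \sigma(T^2 \times \Sigma_g)$. As $\chi(T^2 \times \Sigma_g) = \chi(T^2)\chi(\Sigma_g) = 0$ and $\sigma(T^2 \times \Sigma_g) = 0$, this yields $c_2(M_g(i)) = 0 = \sigma(M_g(i))$ for all $i$ and $g$. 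For the first Betti number I would track the effect on $H_1$: following the surgery formulas of Baldridge–Kirk \cite{[BK0]}, the Luttinger surgery along $T_c$ can be framed so as to introduce a relation killing the class $[c]$, lowering $b_1$ by exactly one. Performing $i$ such surgeries along curves representing distinct generators of $H_1(\Sigma_g; \Z)$ — possible precisely for $i \leq 2g$ — starting from $b_1(T^2 \times \Sigma_g) = 2 + 2g$ gives $b_1(M_g(i)) = 2 + 2g - i$.

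Finally, for the polarized $\mathcal{T}$-structure I would use rotation of the $S^1_x$-factor, a free circle action on $T^2 \times \Sigma_g$ that preserves each $T_c = S^1_x \times c$ and restricts on $\nu(T_c) = T_c \times D^2$ to free rotation of the $S^1_x$-factor, fixing the two normal directions. Performing the surgery \emph{along the $c$-direction}, i.e. twisting the framing in the $(c, \mathrm{meridian})$-plane of $T^3 = S^1_x \times c \times S^1$, leaves the $S^1_x$-coordinate unchanged under the gluing diffeomorphism, so the free $S^1_x$-action extends across every surgery. As the tori are disjoint and all carry the same circle, this assembles into a single global free $S^1$-action on $M_g(i)$, whose orbits have constant positive dimension and no fixed points — a polarized $\mathcal{T}$-structure in the sense of Definition \ref{Definition F} (with trivial Galois covers). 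Conveniently, this is the same framing choice that kills $[c]$ in homology, so the $b_1$-computation and the circle action come from one and the same surgery.

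The main obstacle is precisely this compatibility claim: that the Luttinger gluing of $T^3$ can be chosen to fix the $S^1_x$-factor while simultaneously realizing the homological relation that lowers $b_1$. Verifying that the auxiliary curve along which the framing is twisted lies in the span of the $c$- and meridional directions (transverse to $S^1_x$), and confirming that the $i$ relations remain independent so that $b_1$ drops by the full $i$, is where I would lean on the explicit $\pi_1$-presentations and surgery computations of Baldridge–Kirk \cite{[BK0]}.
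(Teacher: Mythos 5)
Your proposal is correct and follows essentially the same route as the paper: the same product Lagrangian tori (a circle factor of $T^2$ times a curve in $\Sigma_g$), the same Euler-characteristic/Novikov-additivity argument for $c_2$ and $\sigma$, the same count for $b_1$, and the same mechanism of extending circle actions in the $T^2$-direction across the affine gluings of the $T^3$-boundaries. The only cosmetic difference is that you arrange all tori to contain the single factor $S^1_x$ and so obtain one global free circle action, whereas the paper pastes together local commuting actions associated to possibly different generators of $\pi_1(T^2)$.
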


\begin{proof} The Lagrangian tori that are used in the logarithmic transformations are of the form \begin{equation}\{x_1\}\times S^1\times S^1\subset S^1\times (S^1\times \Sigma_g) = T^2\times \Sigma_g\end{equation} and \begin{equation}S^1\times \{x_2\}\ \times S^1 \subset S^1\times S^1\times \Sigma_g = T^2 \times \Sigma_g.\end{equation} Let $\alpha_{j}$ be a curve in $T^2$ that carries a generator of the group $\pi_1(T^2)$, and let $\beta_{k}$ be a curve in $\Sigma_g$ that carries a generator of $\pi_1(\Sigma_g)$. We denote the Lagrangian tori by $T_{j, k} :=\alpha_{j}\times \beta_{k}$. When handling several tori simultaneously, we will use the heavy notation $\alpha^{(i)}_j, \beta^{(i)}_k$, and $T^{(i)}_{j, k}$. The curves $\alpha^{(i)}_j$ are parallel push offs of $\alpha_j$, and the curves $\beta^{(i)}_k$ of $\beta_k$ (see \cite{[BK]} for details). The Lagrangian framings will be used for these curves on every logarithmic transformation, and we denote them by $S^1_{\alpha_i}$ and $S^1_{\beta_k}$. The meridian $\mu_{j, k}$ of $T_{j, k}$ in the complement of the tubular neighborhood of the torus inside $M$ is a curve within the same isotopy class of $\{t\}\times \partial D^2 \subset \partial \nu(T_{j, k})$. Recall that two homotopic loops inside a 4-manifolds are isotopic. The diffeomorphism $\varphi$ used to glue the pieces together satisfies $\varphi_{\ast}([\partial D^2]) = p [S^1_{\alpha_j}] + q[S^1_{\beta_k}] + r [\mu_{j, k}]$ in the homology group $H_1(M - \nu(T_{j, k}); \Z) = \Z^3$. Without loss of generality, we can assume that the tori $T_{j, k}$ are disjoint and the surgeries that will be performed require only one of the integers $p$ or $q$ to be non-zero.

Each such such logarithmic transformations reduces the first Betti number by one, and its second Betti number by two. The Euler characteristic is invariant under torus surgeries, and Novikov additivity implies that so is the signature. In particular, we have $c_2(M_g(i)) = c_2(T^2\times \Sigma_g) = 0$, and $\sigma(M_g(i)) = \sigma(T^2\times \Sigma_g) = 0$.

Fix arbitrary $g$ and $i$; these choices depend on the desired values of first Betti number of $M_g(i)$. We apply the logarithmic transformations simultaneously to $T^2\times \Sigma_g$ to obtain \begin{equation} M_g(i):= (T^2\times \Sigma_g - \underset{i}{\bigsqcup} \nu(T_{j, k})) \bigcup_{\varphi_{i}} (\underset{i}{\bigsqcup} T^2 \times D^2),\end{equation} where notation is being abused. Up to diffeomorphism of the resulting manifold, the gluing diffeomorphism $\varphi_{i}$ can be replaced by an affine transformation $A_{i}$ of the common $T^3$ boundary that is isotopic to $\varphi_{i}$. 

The task at hand is to equip $T^2\times \Sigma_g$ and each copy of $T^2\times D^2$ with polarized $\mathcal{T}$-structures such that they commute with each other at their common $T^3$-boundaries. We will do so by equipping each piece in the decomposition of $M_g(i)$ with a free circle action. This yields a polarized $\mathcal{T}$-structure on $M_g(i)$, and we proceed to do so now. For each curve $\alpha^{(i)}_{j}$ there exists a free $S^1$-action $\sigma_{j}$ on $T^2\times \Sigma_g$ whose orbits are in the same homotopy class of $\alpha^{(i)}_{j}$, and these actions commute among them.

Now equip each copy of $T^2 \times D^2$ with a fixed point free $S^1$-action $\tau_{i}$ whose orbits are in the homotopy class of the image of $\alpha^{(i)}_j$ under the affine transformation $A_{i}\alpha^{(i)}_{j}$. Since the actions $\sigma_{i}$ and $\tau_{i}$ commute with respect to conjugation with the affine maps $A_{i}$\begin{equation}A_{i}^{-1}\tau_{i} A_{i} \sigma_{i} = \sigma_{i} A_{i}^{-1}\tau_{i} A_{i},\end{equation} (see \cite{[SS09]} for details). The local circle actions paste together to yield a polarized $\mathcal{T}$-structure on $M_g(i)$. Since the choices of values of $i$ and $g$ were arbitrary, this concludes the proof. \end{proof}

\subsection{Vanishing of Yamabe invariant}{\label{Section YI}}  In this section we recall the results that we use to prove the vanishing of the Yamabe invariant for symplectic 4-manifolds of nonnegative Kodaira dimension, and its non-realization. In particular, we build greatly upon work of Hitchin, Kazdan-Wagner, LeBrun, Taubes, and Paternain-Petean. Theorem \ref{Theorem PP} states that in the presence of an $\mathcal{F}$-structure, there is the collapse $Vol_K(M) = 0$, which implies $Vol_{Ric}(M) = 0 = Vol_{|Scal|}(M)$. The sectional curvature is denoted by $K$ and the Ricci curvature by Ric. If the dimension of $M$ is at least three, then $Vol_{|Scal|}(M) = Vol_{Scal}(M)$, and $Vol_{Scal}(M) = 0$ is equivalent to $\mathcal{Y}(M) \geq 0$ \cite[Proposition 5]{[LeB]}.\\

The non-vanishing of the Seiberg-Witten invariant of a symplectic 4-manifold \cite{[Ta]} is a known obstruction for the existence of a Riemannian metric of positive scalar curvature \cite{[W94]}. Moreover, $\mathcal{Y}(M) > 0$ if and only if there exists a Riemannian metric of positive scalar curvature on $M$. Hence, one obtains the first part of the following known result.

\begin{proposition}{\label{Proposition ZeroYamabe}} Let $M$ be a closed manifold of dimension greater than two, which collapses with scalar curvature bounded from below. If $M$ does not admit a Riemannian metric of positive scalar curvature, then its Yamabe invariant satisfies \begin{equation} \mathcal{Y}(M) = 0,\end{equation} and any scalar-flat Riemannian metric on $M$ is Ricci-flat.

\end{proposition}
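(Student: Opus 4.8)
The plan is to prove the two assertions in turn. The identity $\mathcal{Y}(M) = 0$ is the ``first part'' already reduced to known facts in the discussion above, while the statement that scalar-flat metrics are Ricci-flat rests on the classical deformation argument of Bourguignon and Kazdan--Warner.

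First I would record the vanishing of the Yamabe invariant. Since $\dim M \geq 3$ and $M$ collapses with scalar curvature bounded from below, the discussion preceding the statement together with \cite[Proposition 5]{[LeB]} gives $Vol_{Scal}(M) = 0$, which is equivalent to $\mathcal{Y}(M) \geq 0$. Because the strict inequality $\mathcal{Y}(M) > 0$ is equivalent to the existence of a metric of positive scalar curvature, and $M$ carries no such metric by hypothesis, we must have $\mathcal{Y}(M) \leq 0$. The two inequalities force $\mathcal{Y}(M) = 0$.

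For the second assertion I would argue by contradiction: assume $g$ is scalar-flat but not Ricci-flat and produce from it a metric of positive scalar curvature, contradicting the hypothesis. The relevant object is the $L^2$-formal adjoint $L_g^{\ast}$ of the linearization of the scalar curvature map, namely
\[ L_g^{\ast}(u) = -(\Delta_g u)\, g + {\rm Hess}_g\, u - u\,{\rm Ric}_g . \]
Taking the $g$-trace of $L_g^{\ast}(u) = 0$ and using ${\rm Scal}_g \equiv 0$ collapses the equation to $(n-1)\,\Delta_g u = 0$, so that every $u \in \ker L_g^{\ast}$ is constant on the closed manifold $M$. Substituting a nonzero constant back into the formula then yields ${\rm Ric}_g \equiv 0$; as $g$ is assumed not Ricci-flat, this shows $\ker L_g^{\ast} = 0$.

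The injectivity of $L_g^{\ast}$ makes the linearized scalar curvature operator $L_g$ surjective, by the Fredholm alternative for the pertinent elliptic system, so one can solve $L_g(h) = 1$ for a symmetric $2$-tensor $h$. The curve $g_t := g + t\,h$ then has ${\rm Scal}_{g_t} = {\rm Scal}_g + t\,L_g(h) + O(t^2) = t + O(t^2)$, which is strictly positive for all sufficiently small $t > 0$, contradicting the absence of positive scalar curvature metrics on $M$. Hence every scalar-flat metric is Ricci-flat. The main obstacle is this surjectivity-and-perturbation step: one must ensure that the solvability of $L_g(h) = 1$, obtained from the triviality of $\ker L_g^{\ast}$ via elliptic theory, genuinely upgrades to a metric of strictly positive scalar curvature with the higher-order terms in $t$ under control. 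This is precisely the content of the Bourguignon--Kazdan--Warner deformation lemma, which I would invoke in the form that a closed scalar-flat manifold that is not Ricci-flat admits a metric of positive scalar curvature.
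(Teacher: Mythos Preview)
Your proposal is correct and follows the same route as the paper: the vanishing $\mathcal{Y}(M)=0$ is obtained exactly as in the text preceding the proposition, and for the Ricci-flatness of scalar-flat metrics the paper simply invokes the Kazdan--Warner trichotomy \cite[4.35 Theorem]{[Be]}, whose proof you have essentially unpacked via the Bourguignon deformation argument. In other words, you supply the details behind the citation rather than take a different path.
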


The statement concerning the vanishing of the Ricci curvature is an instance of Kazdan-Wagner's result on the trichotomy problem of scalar curvature of a Riemannian metric on a compact manifold \cite[4.35 Theorem]{[Be]}.\\

To prove that the Yamabe invariant is not realized, we will use a beautiful theorem of Hitchin \cite{[Hit1]}. In the case of Kodaira dimension zero and one, we have $c_1^2(M) = 2c_2(M) + 3\sigma(M) = 0$. Hitchin's result \cite[6.37 Theorem]{[Be]} states that a Ricci-flat oriented 4-manifold has either zero sectional curvature, is the K3 surface, the Enriques surface, or a quotient of the later by a free antiholomorphic involution. Since the holomorphic Kodaira dimension coincides with the symplectic one, this immediately implies that no symplectic 4-manifold of Kodaira dimension one can realize a vanishing Yamabe invariant, and restricts those of Kodaira dimension zero that do realize it (see \cite{[LeB], [To]}).

\subsection{Symplectic sums: proof of Theorem \ref{Theorem Prescribed}}{\label{Section Proof}} Let $M_1$ and $M_2$ be closed symplectic 4-manifolds, each containing an embedded symplectic torus $T_{M_1}\subset M_1$ and $T_{M_2}\subset M_2$ with trivial normal bundles. In particular their tubular neighborhoods $\nu(T_{M_1})$ and $\nu(T_{M_2})$ are diffeomorphic to $T^2\times D^2$. The \emph{symplectic sum}  of $M_1$ and $M_2$ along  $T_{M_1}$ and $T_{M_2}$ is \begin{equation}M_1\#_{T_{M_1} = T_{M_2}} M_2 := (M_1 - \nu(T_{M_1})) \cup_{\varphi} (M_2 - \nu(T_{M_2}))\end{equation} where the gluing map $\varphi: \partial \nu(T_{M_1})\rightarrow \partial \nu(T_{M_2}) = T^2\times \partial D^2 = T^3$ is an orientation-reversing diffeomorphism. Moreover, any diffeomorphism of the 3-torus is isotopic to an affine transformation (see \cite[Corollary 10]{[SS09]} for a proof).\\

We now proceed to prove Theorem \ref{Theorem Prescribed}. We build upon the following result.

\begin{theorem}{\label{Theorem BK}} Baldridge-Kirk \cite{[BK0]}. Let $G$ be a group with a presentation that consists of $g$ generators and $r$ relations. There exists a minimal symplectic 4-manifold $M(G)$ with fundamental group $\pi_1(M(G)) = G$, and characteristic numbers $c_2(M(G)) = 12(g + r + 1)$ and $\sigma(M(G)) = - 8(g + r + 1)$.
\end{theorem}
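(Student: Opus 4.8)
The plan is to construct $M(G)$ as a \emph{fake} elliptic surface: a symplectic $4$-manifold carrying the same characteristic numbers as the honest elliptic surface $E(g+r+1)$, namely $c_2 = 12(g+r+1)$ and $\sigma = -8(g+r+1)$, but with $\pi_1 \cong G$ in place of the trivial group. I would assemble it from two kinds of pieces glued by symplectic sums along $2$-tori: elliptic building blocks, each a copy of the rational elliptic surface $E(1)$ with $\chi = 12$ and $\sigma = -8$, which supply the characteristic numbers, and product pieces of the form $\Sigma_h \times T^2$, which supply the generators of the fundamental group and a reservoir of homologically essential Lagrangian tori $\alpha \times \beta$ exactly as in Proposition \ref{Proposition Luttinger}. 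Since $\chi(T^2) = 0$, the Euler characteristic is additive under symplectic sum along a torus, and the signature is additive by Novikov; the product pieces contribute $\chi = \sigma = 0$, so fixing the number of elliptic blocks to $g+r+1$ already forces the claimed characteristic numbers. The torus surgeries performed below replace $T^2 \times D^2$ by $T^2 \times D^2$ and hence leave $\chi$ and $\sigma$ untouched.

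First I would fix a presentation $G = \langle x_1, \ldots, x_g \mid w_1, \ldots, w_r \rangle$ and build a \emph{raw} symplectic manifold $X$ whose fundamental group is a free product of surface groups, on generators in bijection with the $x_i$ together with auxiliary generators carried by the elliptic blocks and the second factors of the product pieces. The van Kampen theorem, applied to each symplectic sum, expresses $\pi_1$ of the sum as an amalgamated product over the image of $\pi_1(T^3) = \Z^3$; by choosing the gluing tori so that they carry the curves representing the generators, I would control precisely which classes survive each sum. Gompf's theorem guarantees that $X$ is symplectic, and the Lagrangian tori in the product pieces remain Lagrangian after the sums.

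Next I would impose the relations and trim the surplus generators by Luttinger surgery. A single Luttinger surgery along a Lagrangian torus $T_{j,k} = \alpha_j \times \beta_k$ with a prescribed surgery coefficient identifies its meridian with a word in the surrounding generators, which by van Kampen adds exactly one relation to $\pi_1$. Choosing the tori, their Lagrangian framings, and the surgery coefficients appropriately, I would use $r$ surgeries to introduce the relations $w_1, \ldots, w_r$ and further surgeries to kill every auxiliary generator, so that the fundamental group collapses from the free presentation to the quotient $\langle x_1, \ldots, x_g \mid w_1, \ldots, w_r \rangle \cong G$. Luttinger surgery is symplectic by Auroux--Donaldson--Katzarkov, so the result $M(G)$ is symplectic, and by the first paragraph it has the stated characteristic numbers.

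The two genuinely delicate points are the $\pi_1$-bookkeeping and minimality, and the former is the main obstacle: one must verify that after all the sums and surgeries the surviving relations are precisely $w_1, \ldots, w_r$, with no unwanted generators or relations left over, and simultaneously arrange that the elliptic-block count lands exactly on $g+r+1$ so that the characteristic numbers come out as claimed. This forces the explicit choice of tori, framings, and gluing affine maps to be pinned down with care. For minimality I would invoke Usher's theorem that a relatively minimal symplectic sum along a surface of positive genus is minimal --- the gluing tori have genus one --- or, alternatively, compute the canonical class and use Taubes' nonvanishing of the Seiberg--Witten invariant to exclude symplectic $(-1)$-spheres. Combining these ingredients produces the minimal symplectic manifold $M(G)$ with $\pi_1(M(G)) \cong G$, $c_2(M(G)) = 12(g+r+1)$, and $\sigma(M(G)) = -8(g+r+1)$.
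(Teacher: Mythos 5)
This statement is not proved in the paper: it is quoted from Baldridge--Kirk \cite{[BK0]}, and the paper only recalls the construction in order to put a $\mathcal{T}$-structure on it. In that construction the relations of $G$ are encoded once and for all in the monodromy of a fibered $3$-manifold $Y$, one sets $X = Y\times S^1$, and the $g+r+1$ copies of $E(1)$ are symplectically summed along the tori $T_0 = s\times t$ and $T_i = s\times\gamma_i$; each such sum kills the image of $\pi_1$ of the torus it is glued along (the complement of a fiber in $E(1)$ is simply connected, and the meridian dies near a cusp fiber), so the fundamental group becomes $\pi_1(X)/N(s,t,\gamma_1,\ldots,\gamma_{g+r})\cong G$, and the number $g+r+1$ is forced by the number of classes to be killed. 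Your proposal agrees with this on the soft parts: additivity of $\chi$ and $\sigma$ under torus sums, the $(12,-8)$ contribution of each $E(1)$ block, and minimality via Usher's theorem (which does apply here: by adjunction any symplectic $(-1)$-sphere $E$ in $E(1)$ satisfies $K\cdot E=-1$, hence meets the fiber $f=-K$, so the pairs are relatively minimal). But you route the essential content of the theorem, $\pi_1(M(G))\cong G$, through Luttinger surgery, and that is where there is a genuine gap.

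The gap is this: Luttinger surgery along a Lagrangian torus $\alpha_j\times\beta_k$ does not let you ``add the relation $w_i=1$'' for an arbitrary word $w_i$. The relation it adds has the very constrained form (one of the two torus curves) $=\mu^{\pm n}$, where $\mu$ is the meridian, and in a product such as $\Sigma_h\times T^2$ the meridian is a commutator of the surrounding generators. Consequently (i) the only classes you can affect are those represented by the factors of your tori, which are \emph{embedded} curves on $\Sigma_h$ and $T^2$; an arbitrary relator $w_i$ is in general not represented by any simple closed curve (already $x^2$ is not), so no torus of the form you allow carries $w_i$ as a factor; and (ii) even ``killing an auxiliary generator'' yields $\beta_k=[\cdot,\cdot]^{\pm1}$ rather than $\beta_k=1$, so generators die only after the relevant meridians have been made nullhomotopic --- a circular constraint your sketch does not resolve. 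This is precisely the difficulty Baldridge--Kirk bypass with the mapping-torus monodromy (relations imposed for free inside $Y$), and that Gompf bypasses by replacing immersed representatives of arbitrary words by embedded tori in $F\times T^2$ (pushing double points into the second factor); but Gompf's tori must then be killed by fiber sums with $E(1)$, each costing $\chi=12$, which would destroy your count of $g+r+1$ elliptic blocks. Flagging the $\pi_1$-bookkeeping as ``delicate'' does not fill this hole: as written, the $r$ surgeries you propose cannot produce the relations $w_1,\ldots,w_r$.
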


The first claim of Theorem \ref{Theorem Prescribed} is that the symplectic manifold $M(G)$ admits a $\mathcal{T}$-structure, and we proceed to construct it. Baldridge-Kirk built the manifold $M(G)$ as a symplectic sum of $g + r + 1$ copies of the elliptic surface $E(1)$ with a symplectic manifold of the form $X = Y\times S^1$, where $Y$ is a fibered 3-manifold. The group $G$ can be expressed in terms of the fundamental group of $X$ as \begin{equation} G\cong \pi_1(X)/N(s, t, \gamma_1, \cdots, \gamma_{r +g}),\end{equation}contains classes where $\{s, t, \gamma_1, \cdots, \gamma_{r + g}\}$ are classes in the fundamental group of $X$ and $N(s, t, \gamma_1, \cdots, \gamma_{r+g})$ is the normal subgroup generated by such classes. The pieces are pasted together along regular fibers of the elliptic surfaces, and symplectic tori $\{T_0, T_1, \cdots, T_{g+r}\}$ contained in $X$. The two generators of $\pi_1(T_0)$ represent the class $s$ and $t$, and the two generators of $\pi_1(T_i)$ represent the classes $s$ and $\gamma_i$. All these tori have self-intersection zero, and their tubular neighborhoods are diffeomorphic to $T^2\times D^2$ (see \cite{[Go], [BK0]} for further details on the construction). The task at hand is to choose $\mathcal{F}$-structures on the elliptic surface and on the product 4-manifold, which commute with the gluing map at the common $T^3$-boundaries. This ensures that they paste well together into a global $\mathcal{F}$-structure on $M(G)$. In particular, all the $\mathcal{F}$-structures involved are $\mathcal{T}$-structures.\\

The choices of $\mathcal{T}$-structures are the following. Take the canonical $S^1$-action on $X = Y\times S^1$ given by rotation on the second factor, and denote this polarized $\mathcal{T}$-structure by $\tau_0$. The $\mathcal{T}$-structure we use for $E(1)$ blocks was constructed by Paternain-Petean in \cite[Proof of Theorem 5.10]{[PP1]} using its structure of an elliptic fibration, and we proceed to describe it. Consider the diffeomorphism $J := (I, H): T^2\times S^2\rightarrow T^2\times S^2$ with eight fixed points given by $I:\R^2/\Z^2\rightarrow \R^2/\Z^2 = T^2$ as $z\mapsto - z$, and $H:S^2\rightarrow S^2$ as the 180 degrees rotation about the z-axis. Let $B\subset \C^2$ be a ball on which  the circle acts by $\lambda\cdot (w_1, w_2) = (w_1, \lambda\cdot w_2)$. Consider $U:= \{(z, l)\in B\times \mathbb{CP}^1 : z\in l\}$, and the canonical projection $U\rightarrow B$. The aforementioned circle action on the ball $B$ commutes with the involution $J$, and hence it induces a circle action on $U$. Inside $T^2\times S^2$, identify a neighborhood of each fixed point with $B$ and replace it with $U$ to construct a complex surface $\tilde{S}$. The involution $J$ extends to an involution $\tilde{J}: \tilde{S}\rightarrow \tilde{S}$ whose fixed point set consists of eight spheres. In particular, one has $E(1) = \tilde{S}/\tilde{J}$ and it can be expressed as $E(1)\rightarrow S^2$ with $T^2$ as a generic fiber. Paternain-Petean extend the circle action on the $U$ piece to a circle action on the fibers corresponding to the pre images of the north and south poles of $E(1)\rightarrow S^2$. On the complement of these fibers, the elliptic surface is the total space of a fiber bundle with structure group $\{id, I\}$. Call this $\mathcal{T}$-structure $\tau_1$.\\

The gluing diffeomorphism $\varphi$ that is used to construct the symplectic sum of $X$ and a copy of $E(1)$ can be assumed to be isotopic to an affine transformation of the common 3-torus boundary of the building blocks. With respect to this affine representative of the isotopy class of $\varphi$, the circle actions $\tau_0$ and $\tau_1$ commute after conjugation with the affine map. Therefore, they paste well together to a global $\mathcal{T}$-structure (the precise computations can be found in \cite{[SS09]}). Moreover, this extension property for $\mathcal{T}$-structures holds true for a finite arbitrary number of symplectic sums. Therefore, equipping each of the $g + r + 1$ copies of  $E(1)$ with the $\tau_1$ structure and $X$ with the circle action $\tau_0$ yield a $\mathcal{T}$-structure on the symplectic manifold $M(G)$. \\

Theorem \ref{Theorem PP} says its minimal topological entropy and simplicial volume vanish, and that it collapses with sectional curvature bounded from below. Proposition \ref{Proposition ZeroYamabe} now says that its Yamabe invariant is $\mathcal{Y}(M(G)) = 0$. Indeed, Taubes' results \cite{[Ta]} imply that the Seiberg-Witten invariants of $M(G)$ are non-trivial, and therefore it does not admit a Riemannian metric of positive scalar curvature. Suppose the Yamabe invariant were achieved, that is, that there exists a Riemannian metric of zero scalar curvature on $M(G)$. Such a metric would have to be Ricci-flat (cf. Proposition \ref{Proposition ZeroYamabe}), and in particular an Einstein metric \cite{[Be]}. The characteristic numbers of Theorem \ref{Theorem BK} imply that the equality is achieved $|\sigma(M(G))| = \frac{3}{2} c_2(M(G))$. A well-known result of Hitchin \cite[6.37 Theorem]{[Be]} implies that the holomorphic Kodaira dimension of such a manifold is zero. Recall that the symplectic Kodaira dimension of $M(G)$ is one. Since the holomorphic Kodaira dimension, and the symplectic Kodaira dimension coincide, we conclude this is a contradiction. In particular, the Yamabe invariant of $M(G)$ vanishes and it is not realized (cf. Section \ref{Section YI}). This concludes the proof of Theorem \ref{Theorem Prescribed}. 

\section{Proof of Theorem \ref{Theorem Gen}, Corollary \ref{Corollary Gen}, and Theorem \ref{Theorem Kod}}{\label{Section ProofKod}}

As it was mentioned in the introduction, the proofs of our main results are done case-by-case in terms of the symplectic Kodaira dimension. Each dimension is addressed in a subsection. Notice that it suffices to consider only minimal symplectic 4-manifolds in our proofs. Indeed, any non-minimal symplectic 4-manifold $N$ is diffeomorphic to a connected sum $N = M\# k \overline{\mathbb{CP}^2}$ for $k\in \N$, where $M$ is its symplectic minimal model. The manifold $\overline{\mathbb{CP}^2}$ admits a non-trivial $S^1$-action, i.e., a $\mathcal{T}$-structure. Paternain-Petean \cite[Theorem 5.9]{[PP1]} have shown that the existence of $\mathcal{F}$-structures is closed under connected sums.

\subsection{Kodaira dimension $- \infty $ } Liu \cite{[L96]} has classified minimal symplectic 4-manifolds of Kodaira dimension $- \infty$. Such a manifold is either rational or ruled. A rational symplectic 4-manifold is diffeomorphic to $S^2\times S^2$ or to $\mathbb{CP}^2$ blown up $k$ times. A ruled symplectic 4-manifold is diffeomorphic to an $S^2$-bundle over a Riemann surface blown up $k$ times. In particular, all these manifolds admit K\"ahler structure. Their collapse was studied by LeBrun in \cite{[LeB]}. Paternain-Petean \cite{[PP1], [PP2]} have equipped these manifolds with a $\mathcal{T}$-structure, yielding the following result.

\begin{proposition} LeBrun, Liu, Paternain-Petean. Every symplectic 4-manifold of Kodaira dimension $- \infty$ admits an $\mathcal{F}$-structure, has vanishing minimal entropy and simplicial volume, and collapses with sectional curvature bounded from below. Its Yamabe invariant is positive. 
\end{proposition}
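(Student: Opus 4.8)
The plan is to prove the final Proposition by combining the classification of minimal symplectic 4-manifolds of Kodaira dimension $-\infty$ with the $\mathcal{F}$-structure technology of Theorem \ref{Theorem PP}, together with known positivity results for the Yamabe invariant. First I would invoke Liu's classification \cite{[L96]}: every minimal symplectic 4-manifold of Kodaira dimension $-\infty$ is either rational or ruled. By the reduction remark opening Section \ref{Section ProofKod}, it suffices to treat minimal models, since the existence of an $\mathcal{F}$-structure is preserved under connected sum with $\overline{\mathbb{CP}^2}$ by Paternain-Petean \cite[Theorem 5.9]{[PP1]}. Thus the essential cases are $S^2\times S^2$, $\mathbb{CP}^2$, and $S^2$-bundles over a Riemann surface $\Sigma_g$.

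Next I would exhibit the $\mathcal{T}$-structure explicitly, or cite its construction. In each of these cases the manifold carries an obvious nontrivial $S^1$-action: rotation in an $S^2$ factor, the standard circle action on $\mathbb{CP}^2$, or fiberwise rotation on the $S^2$-bundle over $\Sigma_g$. Any such effective circle action is in particular a (polarized) $\mathcal{T}$-structure in the sense of Definition \ref{Definition F}, so I would note that Paternain-Petean \cite{[PP1], [PP2]} have equipped precisely these manifolds with $\mathcal{T}$-structures. Applying Theorem \ref{Theorem PP} then immediately yields $h(M) = 0 = \|M\| = Vol_K(M)$ and collapse with sectional curvature bounded from below, and also gives the a priori bound $\mathcal{Y}(M) \geq 0$.

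To upgrade the inequality to strict positivity $\mathcal{Y}(M) > 0$, I would use that all these manifolds admit Riemannian metrics of positive scalar curvature. This is classical: $S^2 \times S^2$, $\mathbb{CP}^2$, and $S^2$-bundles over $\Sigma_g$ all carry such metrics (for instance, a product or warped metric using the round $S^2$ factor, whose positive curvature dominates), and the property is preserved under blow-up, i.e. connected sum with $\overline{\mathbb{CP}^2}$, by the Gromov-Lawson surgery theorem. Since $\mathcal{Y}(M) > 0$ if and only if $M$ admits a positive scalar curvature metric (as recalled in Section \ref{Section YI}), this establishes the Yamabe claim. Alternatively, one may cite LeBrun's \cite{[LeB]} direct computation of the Yamabe invariant for these rational and ruled surfaces.

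The only genuine subtlety, and the step I would expect to demand the most care, is the pasting of the fiberwise circle action on the $S^2$-bundle over $\Sigma_g$ together with any blow-up $\mathcal{T}$-structures so that the compatibility condition of Definition \ref{Definition F} holds at overlaps; but this is exactly the commuting-after-affine-conjugation argument already carried out in the proofs of Proposition \ref{Proposition Luttinger} and Theorem \ref{Theorem Prescribed}, and reduced to Paternain-Petean's results, so no new difficulty arises. Everything else is a direct citation, and the proof reduces to assembling these ingredients.
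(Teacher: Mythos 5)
Your proposal is correct and follows essentially the same route as the paper, which likewise reduces to Liu's classification of minimal rational and ruled surfaces, cites Paternain--Petean for the $\mathcal{T}$-structures coming from circle actions, applies Theorem \ref{Theorem PP}, and attributes the positivity of the Yamabe invariant to LeBrun. The only quibble is your parenthetical ``(polarized)'': the standard circle actions on $\mathbb{CP}^2$ and $S^2\times S^2$ have fixed points, so these $\mathcal{T}$-structures need not be polarized, but this does not affect the argument since Theorem \ref{Theorem PP} only requires an $\mathcal{F}$-structure.
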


\subsection{Kodaira dimension zero}{\label{Section Zero}} The known examples of symplectic 4-manifolds of vanishing Kodaira dimension at the time of writing of this note can be summarized in the following list.

\begin{itemize}
\item K3 surface, 
\item Enriques surface,
\item $T^2$-bundles over $T^2$,
\item $S^1$-bundle over $Y$, where $Y$ is a 3-manifold that fibers over the circle,
\item primary Kodaira surface, and
\item cohomologically symplectic infrasolvmanifolds.

\end{itemize}

The reader is directed towards \cite{[L1], [L2], [FV13]} for further details. In particular, all known symplectic 4-manifolds of zero Kodaira dimension with positive first Betti number are infrasolvmanifolds (see \cite{[Hi]} for the definition).

\begin{proposition}{\label{Kodaira Zero}} Let $M$ be a minimal symplectic closed 4-manifold of symplectic Kodaira dimension zero that is included in the previous list. Then $M$ admits an $\mathcal{F}$-structure, and consequently 
\begin{equation} h(M) = 0 = ||M|| = Vol_K(M),\end{equation} and $M$ collapses with sectional curvature bounded from below. 

The Yamabe invariant satisfies \begin{equation} \mathcal{Y}(M) = 0, \end{equation} and it is realized if and only if $M$ is diffeomorphic to the K3 surface, Enriques surface, a complex torus or a hyperelliptic complex surface.

Moreover, if $c_2(M) = 0$, the $\mathcal{F}$-structure is polarized and \begin{equation} MinVol(M) = 0,\end{equation} and $M$ collapses with bounded sectional curvature. 
\end{proposition}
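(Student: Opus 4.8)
The plan is to prove the three assertions in turn, handling the six families on the list case by case where the geometry differs. First I would construct the $\mathcal{F}$-structure for each type. For the K3 surface, the Enriques surface, and the primary Kodaira surface I would exploit their elliptic fibration structures, adapting the construction Paternain-Petean used for $E(1)$ (the $\tau_1$ structure recalled in Section \ref{Section Proof}): the generic torus fiber carries a free $T^2$-action, and near the singular fibers one extends a residual circle action as in \cite{[PP1]}. The Enriques surface, being a free $\mathbb{Z}/2$-quotient of a K3, inherits such a structure from an equivariant one upstairs. For the $T^2$-bundles over $T^2$ and the $S^1$-bundles over a fibered $3$-manifold $Y$, the fiberwise action (the torus fiber, respectively the circle fiber) is free and furnishes directly a polarized $\mathcal{T}$-structure; for the cohomologically symplectic infrasolvmanifolds I would use the underlying homogeneous structure, descending left-invariant circle subgroups of the solvable model to the quotient. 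With an $\mathcal{F}$-structure in hand, the vanishing of $h(M)$, $\|M\|$, and $Vol_K(M)$, together with the collapse statement, follow immediately from Theorem \ref{Theorem PP}.

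Second, I would pin down $\mathcal{Y}(M)=0$. Theorem \ref{Theorem PP} already gives $\mathcal{Y}(M)\geq 0$. For the reverse inequality I would invoke Taubes' nonvanishing of the Seiberg-Witten invariants of symplectic $4$-manifolds \cite{[Ta]}, which obstructs positive scalar curvature \cite{[W94]}; since $\mathcal{Y}(M)>0$ is equivalent to the existence of a positive scalar curvature metric, we conclude $\mathcal{Y}(M)\leq 0$, hence equality. For the realization dichotomy I would apply Proposition \ref{Proposition ZeroYamabe}: any metric attaining the vanishing Yamabe invariant is scalar-flat, hence Ricci-flat. Since Kodaira dimension zero forces $c_1^2(M)=2c_2(M)+3\sigma(M)=0$, Hitchin's classification of Ricci-flat oriented $4$-manifolds \cite[6.37 Theorem]{[Be]} leaves only the flat manifolds, the K3 surface, the Enriques surface, or a free antiholomorphic quotient of the latter. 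Among Kodaira dimension zero symplectic $4$-manifolds the flat ones are exactly the complex tori and the hyperelliptic surfaces, which establishes the "only if" direction. The converse is standard: K3 carries a Calabi-Yau metric by Yau, complex tori and hyperelliptic surfaces are flat, and the Enriques surface inherits a Ricci-flat K\"ahler metric as a quotient of the K3 metric---each of these is scalar-flat and so realizes $\mathcal{Y}(M)=0$.

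Finally, under the hypothesis $c_2(M)=0$ I would observe that this excludes the K3 ($c_2=24$) and Enriques ($c_2=12$) surfaces, leaving precisely the families built from free circle or torus actions. These carry a \emph{polarized} $\mathcal{T}$-structure, that is, one whose orbits are everywhere positive-dimensional, so the collapse can be performed with two-sided bounded sectional curvature; by Cheeger-Gromov \cite{[CG1]} this yields $MinVol(M)=0$. I expect the main obstacle to be the uniform treatment of the infrasolvmanifold case---verifying that the left-invariant torus subgroups of the solvable model genuinely descend to a polarized $\mathcal{F}$-structure on the quotient, and that their lifts commute in the sense of Definition \ref{Definition F}---since this requires unwinding the specific group-theoretic data of the infrasolv structure rather than relying on a single geometric fibration picture.
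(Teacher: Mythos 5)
Your proposal is correct and follows essentially the same route as the paper: elliptic fibration structures (via Paternain--Petean) for the K3 and Enriques surfaces, fiberwise circle/torus actions for the remaining families with positive first Betti number (which the paper handles uniformly by citing the infrasolvmanifold construction of \cite{[SS09]}), Theorem \ref{Theorem PP} for the collapse statements, Taubes--Witten for $\mathcal{Y}(M)\leq 0$, Hitchin's classification via Proposition \ref{Proposition ZeroYamabe} for the realization dichotomy, and polarized structures plus Cheeger--Gromov for $MinVol(M)=0$ when $c_2(M)=0$. The only cosmetic difference is that you build the structures family by family where the paper compresses the positive-$b_1$ cases into a single citation of \cite{[SS09]} and additionally notes that Proposition \ref{Proposition Luttinger} gives an alternative polarized construction.
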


\begin{proof} The two known examples with small fundamental group are the K3 surface and the Enriques surface. They are both complex elliptic surfaces, hence they admit an $\mathcal{F}$-structure \cite[Theorem 5.10]{[PP1]}. The manifolds in the list that have positive first Betti number are infrasolvmanifolds \cite[Chapter 8]{[Hi]}, and a polarized $\mathcal{F}$-structure was constructed on them in \cite{[SS09]}. The claim $MinVol(M) = 0$ follows from \cite{[CG1]}. We add that this claim also follows from Proposition \ref{Proposition Luttinger}. Let $G$ be the fundamental group of a minimal symplectic 4-manifold $M$ of symplectic Kodaira dimension zero, and suppose $b_1(G) \neq 0$. The work of Bauer \cite{[Ba]} and Li \cite{[L3]} implies $2\leq b_1(G) \leq 4$ and $c_2(M) = 0 = \sigma(M)$. Examples with these Betti and characteristic numbers can be constructed by performing Luttinger surgeries \cite{[Lu], [ADK]} to the 4-torus as in \cite{[BK]}. It was proven in \cite{[CL]} that this cut-and-paste operation preserves Kodaira dimension. Proposition \ref{Proposition Luttinger} implies that these manifolds admit a polarized $\mathcal{F}$-structure.\end{proof}

\subsection{Kodaira dimension one}{\label{Section One}} As it was mentioned in the introduction, Gompf's work \cite{[Go]} implies that symplectic 4-manifolds of positive kodaira dimension are not classifiable. Focus is hence shifted to illustrate their diversity in terms of the geography problem \cite{[Go]}. Baldridge-Li \cite{[BL]} have studied the geography of symplectic 4-manifolds with Kodaira dimension one, and our task in this section is to equipped the manifolds that they have constructed with a $\mathcal{T}$-structure. Notice that the symplectic manifolds of Theorem \ref{Theorem Prescribed} have Kodaira dimension one. We begin by recalling the definitions that are needed to state the result.\\

The degeneracy of a symplectic 4-manifold $(M, \omega)$ is the rank of the kernel of the map \begin{equation} \cup[\omega]: H^1(M; \R)\rightarrow H^3(M; \R).\end{equation}

\begin{definition} A triple $(a, b, c)\in \Z^3$ is said to be admissible if and only if $a = 8k$ for a non-positive integer $k$, \begin{equation} b\geq max\{0, 2 + a/4\}\end{equation} and \begin{equation} 0\leq c \leq b,\end{equation} and $b - c$ is an even number.
\end{definition} The integer $a$ is the signature $\sigma$ of the 4-manifold $(M, \omega)$, $b$ is the first Betti number, and $c$ its degeneracy. 

\begin{proposition}{\label{Proposition BL}} The minimal symplectic 4-manifold $(M, \omega)$ of Kodaira dimension one that realizes any admissible triple $(a, b, c)$ in the sense that \begin{equation}(a, b, c) = (\sigma(M), b_1(M), d(M, \omega))\end{equation} admits a $\mathcal{T}$-structure. Consequently, every such symplectic 4-manifold has \begin{equation} h(M) = 0 = ||M|| = Vol_K(M),\end{equation} and $M$ collapses with sectional curvature bounded from below. 

Moreover, the Yamabe invariant satisfies \begin{equation} \mathcal{Y}(M) = 0\end{equation} and it is not realized, i.e., there exist no scalar-flat Riemannian metrics on $M$.
\end{proposition}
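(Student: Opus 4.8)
The plan is to follow the template established in the proof of Theorem \ref{Theorem Prescribed}: realize each Baldridge-Li manifold as a symplectic sum of standard building blocks glued along symplectic tori, equip every block with a free (or fixed-point-free) circle action so as to obtain polarized $\mathcal{T}$-structures, and arrange that these local actions commute after conjugation by the affine gluing maps on the common $T^3$-boundaries, thereby pasting into a global polarized $\mathcal{T}$-structure on $M$. First I would unpack the Baldridge-Li construction \cite{[BL]}: the minimal symplectic $4$-manifold realizing an admissible triple $(a,b,c)$ is assembled by symplectic summation of standard pieces---elliptic surfaces and products of the form $\Sigma_g\times T^2$---followed by Luttinger surgeries along homologically essential Lagrangian tori to adjust the first Betti number and the degeneracy so as to hit the prescribed values.

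Each building block carries a canonical polarized $\mathcal{T}$-structure. On an elliptic surface I would use the Paternain-Petean circle action $\tau_1$ described in the proof of Theorem \ref{Theorem Prescribed}, which is adapted to the elliptic fibration and restricts, on a regular-fiber neighborhood $T^2\times D^2$, to rotation on a circle factor. On a product block $\Sigma_g\times T^2$ I would take rotation on a circle subfactor of the $T^2$; this action is free and its orbits lie in the fiber direction along which the summation is performed. The Luttinger surgeries are treated exactly as in Proposition \ref{Proposition Luttinger}: each surgery is carried out along a Lagrangian torus swept out by the circle orbits, so the surgered piece inherits a fixed-point-free $S^1$-action whose orbits are carried, under the affine gluing transformation $A_i$, to commuting directions on the complement, yielding $A_i^{-1}\tau A_i\sigma = \sigma A_i^{-1}\tau A_i$.

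The main obstacle is verifying this commutation across \emph{all} admissible triples simultaneously, since the number of summand pieces and Luttinger surgeries---and hence the combinatorics of the $T^3$-boundaries---varies with $(a,b,c)$. The crucial reduction is that every self-diffeomorphism of the $3$-torus is isotopic to an affine transformation (\cite[Corollary 10]{[SS09]}), so it suffices to check on each common boundary that the two circle actions have orbit directions aligned with a shared isotopy class, or at least carried to commuting directions by the relevant affine map. Once the orbit directions on the regular-fiber neighborhoods are so arranged, the local verification is identical to the computation recorded in \cite{[SS09]}, and the circle actions paste to a global polarized $\mathcal{T}$-structure on $M$; in particular every structure involved is a $\mathcal{T}$-structure.

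With the $\mathcal{T}$-structure in hand, Theorem \ref{Theorem PP} immediately yields $h(M) = 0 = ||M|| = Vol_K(M)$ and collapse with sectional curvature bounded from below, together with $\mathcal{Y}(M)\geq 0$. For the Yamabe invariant I would argue as in Theorem \ref{Theorem Prescribed}: Taubes' theorem \cite{[Ta]} gives non-trivial Seiberg-Witten invariants, so $M$ admits no metric of positive scalar curvature, whence $\mathcal{Y}(M) = 0$ by Proposition \ref{Proposition ZeroYamabe}. To rule out realization, suppose a scalar-flat metric existed; Proposition \ref{Proposition ZeroYamabe} forces it to be Ricci-flat, hence Einstein. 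Since $kod(M,\omega) = 1$ gives $c_1^2(M) = 2c_2(M) + 3\sigma(M) = 0$, Hitchin's classification \cite[6.37 Theorem]{[Be]} of Ricci-flat oriented $4$-manifolds leaves only the flat case, the K3 surface, the Enriques surface, or a free antiholomorphic quotient of the Enriques surface---each of which has Kodaira dimension at most zero. As the symplectic and holomorphic Kodaira dimensions coincide, this contradicts $kod(M,\omega) = 1$, so no scalar-flat metric exists and the Yamabe invariant is not realized.
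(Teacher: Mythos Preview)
Your overall strategy is exactly the paper's: put a circle action on each building block, replace each gluing diffeomorphism by an affine representative of its isotopy class on the common $T^3$-boundary, check commutation as in \cite{[SS09]}, and then invoke Theorem~\ref{Theorem PP} together with the Taubes/Kazdan--Warner/Hitchin argument for the Yamabe claims. The Yamabe portion of your proposal is identical to the paper's.

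The one substantive discrepancy is your description of the Baldridge--Li pieces. Their manifolds are \emph{not} built from products $\Sigma_g\times T^2$ followed by Luttinger surgeries; the non-elliptic blocks are what Baldridge--Li call \emph{bundle manifolds}, namely symplectic $S^1$-bundles over a mapping torus $Y=(\Sigma\times[0,1])/\bigl((x,1)\sim(\phi(x),0)\bigr)$ of a surface diffeomorphism $\phi$, and these are symplectic-summed with simply connected elliptic surfaces $E(n)$ along a torus $T=s\times t$, where $s$ is the circle fiber and $t$ is a section of $Y\to S^1$. Since the Proposition is a statement about these particular manifolds, your product-plus-Luttinger model does not, as written, address them. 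The repair is immediate and in fact simpler than your Luttinger route: any $S^1$-bundle over a $3$-manifold carries the free fiber-rotation action, which serves as your $\sigma$; on the $E(n)$ side one uses the Paternain--Petean $\mathcal{T}$-structure, which near a regular fiber restricts to a $T^2$-action $\tau$ on $T^2\times D^2$. Your affine-commutation argument then applies verbatim to the pair $(\sigma,\tau)$, and Proposition~\ref{Proposition Luttinger} is not needed at all.
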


We  recall now Baldridge-Li's \cite{[BL]} construction of these symplectic manifolds. Consider a surface $\Sigma$ of genus $g>0$, a diffeomorphism $\phi$  of $\Sigma$, and the mapping torus $Y=(\Sigma \times [0,1]) / ((x,1) \sim (\phi(x), 0))$.

In their work, Baldridge-Li use certain symplectic $S^1$-bundles over $Y$ that they call {\em bundle manifolds}. The symplectic manifolds that realize any admissible triple in Proposition \ref{Proposition BL} are symplectic sums of bundle manifolds and simply connected elliptic surfaces. For simplicity we will focus on the aspect of their construction that yields a $\mathcal{T}$-structure, and the rest of our claimed results.

\begin{proof} Any circle bundle over a 3-manifold $Y$ admits a free circle action, and hence a $\mathcal{T}$-structure. Let $t$ be a section of the projection $\pi : Y\to S^1$. The preimage $T:= \pi^{-1}(t)$ is a non-trivial torus contained inside $Y$. Call $s$ the loop spanned by the $S^1$-factor of $Y\times S^1$, and express the torus as $T=s \times t$. Define $\sigma$ as the obvious circle action on the $S^1$-factor of $Y\times S^1$. 

Let $T'$ be a regular fiber of $E(n)$ in the neighborhood of a cusp fibre. Take a tubular neighborhood $\nu(T')$ for the elliptic fibration about $T'$, so that $\nu(T')=T^2\times D^2$. Define, for $\tau_{i}, \theta_{i}$ in $S^1$ and $z$ in $D^2$:
$$\tau :T^2\times T^2 \times D^2 \to T^2 \times D^2 \quad ; \quad \tau(\tau_1, \tau_2, \theta_1, \theta_2, z)= (\tau_1\theta_1, \tau_2\theta_2, z)$$

Observe that $E(n)$ admits a $\mathcal{T}$-structure which includes the action $\tau$ on  $\nu(T')$.

Next let $M = E(n)\#_{T=T'}(Y\times S^1)$ denote the symplectic sum \cite{[Go]} of $E(n)$ and $Y\times S^1$ along $T$ and $T'$. Then, the $\mathcal{T}$-structure on $E(n)$ and the circle action $\sigma$ on  $Y\times S^1$ together form a $\mathcal{T}$-structure on M (as in the proof of Theorem \ref{Theorem Prescribed}). The gluing map used in the symplectic sum construction can be replaced by an affine transformation $A$ of $T^3$ in the same isotopy class. The actions $\sigma$ and $\tau$ commute with respect to conjugation with $A$, i.e., $A^{-1}\tau A \sigma = \sigma A^{-1}\tau A$ \cite{[SS09]}. Therefore, we have equipped $M$ with a $\mathcal{T}$-structure. 

Theorem \ref{Theorem PP} implies that $h(M) =  0 = ||M|| = Vol_K(M)$ and $M$ collapses with curvature bounded below. The claims that concern the Yamabe invariant are proven using an argument verbatim to that one in the last paragraph of the proof of Theorem \ref{Theorem Prescribed}, as it was described in Section \ref{Section YI}. \end{proof}

\subsection{Kodaira dimension two} LeBrun computed the Yamabe invariant of a surface of general type \cite[Theorem 7]{[LeB0]} in terms of the square of its first Chern class, and he studied some of their minimal volumes and their collapse with bounded curvatures \cite[Theorem 8]{[LeB0]}. The basic ingredients of his proofs are the non-vanishing of certain Seiberg-Witten invariants and the canonical line bundle of the complex surface. Since a symplectic 4-manifold has a canonical line bundle as well, and the symplectic 4-manifolds studied in this section have non-vanishing Seiberg-Witten invariant, his results have broader generality. In particular, he has shown the following proposition \cite{[LeB0]}. 

\begin{proposition} LeBrun. Every symplectic 4-manifold $(M, \omega)$ with $kod(M, \omega) = 2$ satisfies\begin{equation} \mathcal{Y}(M)  < 0 \end{equation} and \begin{equation} Vol_{Scal}(M) > 0. \end{equation} 
In particular, such 4-manifolds do not admit $\mathcal{F}$-structures. 
\end{proposition}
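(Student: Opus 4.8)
The plan is to follow LeBrun's Seiberg--Witten argument \cite{[LeB0]} verbatim, replacing the Kähler hypothesis with Taubes' theorem so that the estimates apply to the symplectic setting. The central object is the scale-invariant functional
\begin{equation}
I_s(M) := \inf_g \int_M {\rm Scal}_g^2 \, d{\rm vol}_g,
\end{equation}
the infimum being over all smooth metrics. In dimension four this is conformally and scale invariant, and by the dichotomy recalled in Section \ref{Section YI} (cf. \cite{[LeB]}) one has $I_s(M)=\mathcal{Y}(M)^2$ whenever $\mathcal{Y}(M)<0$, while $I_s(M)=0$ exactly when $\mathcal{Y}(M)\geq 0$. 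Hence it suffices to prove the strict positivity $I_s(M)>0$: the two displayed inequalities and the non-existence of an $\mathcal{F}$-structure will follow formally.

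First I would supply the Seiberg--Witten input. By Taubes \cite{[Ta]} the canonical class of a symplectic 4-manifold is a monopole class, i.e.\ the Seiberg--Witten invariant of the associated $\mathrm{Spin}^c$ structure is non-trivial; passing to the minimal model $M_{min}$ and invoking the blow-up formula, the canonical class $K_{min}$ survives as a monopole class as well. Since $kod(M,\omega)=2$, the defining inequalities give $K_{min}\cdot K_{min}>0$, and $c_1^2(M_{min})=K_{min}\cdot K_{min}>0$.

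Next I would invoke LeBrun's curvature estimate. For any metric $g$ and any monopole class with Chern class $c_1$, the Weitzenböck argument applied to an irreducible Seiberg--Witten solution (whose existence Taubes guarantees) yields
\begin{equation}
\int_M {\rm Scal}_g^2 \, d{\rm vol}_g \;\geq\; 32\pi^2\,(c_1^+)^2,
\end{equation}
where $c_1^+$ is the self-dual part of the $g$-harmonic representative of $c_1$. As the intersection form is positive definite on self-dual and negative definite on anti-self-dual harmonic forms, $(c_1^+)^2\geq c_1^2$; LeBrun's refinement using the exceptional classes makes the bound insensitive to blow-ups, so that $\int_M {\rm Scal}_g^2\, d{\rm vol}_g \geq 32\pi^2\, c_1^2(M_{min})$ for every $g$. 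Taking the infimum gives $I_s(M)\geq 32\pi^2\, c_1^2(M_{min})>0$, whence $\mathcal{Y}(M)=-\sqrt{I_s(M)}\leq -4\pi\sqrt{2\,c_1^2(M_{min})}<0$. The equivalence $Vol_{Scal}(M)=0\Leftrightarrow \mathcal{Y}(M)\geq 0$ of Section \ref{Section YI} then forces $Vol_{Scal}(M)>0$. Finally, were $M$ to admit an $\mathcal{F}$-structure, Theorem \ref{Theorem PP} would give $\mathcal{Y}(M)\geq 0$ (equivalently $Vol_K(M)=0$, hence $Vol_{Scal}(M)=0$), contradicting what we just proved.

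The hard part is the Seiberg--Witten scalar-curvature estimate in the form that depends only on the minimal model, together with the borderline case $b_+=1$. For $b_+\geq 2$ the invariant is metric-independent and the displayed bound is immediate from the Weitzenböck formula once irreducibility is in hand; the delicate points are (i) replacing $c_1^2(M)$ by $c_1^2(M_{min})$ when $M$ is non-minimal, which requires running the estimate over the full family of basic classes $K_{min}\pm E_1\pm\cdots\pm E_k$ furnished by the blow-up formula, and (ii) treating $b_+=1$, where the symplectic form fixes the chamber so that Taubes' invariant stays well-defined and the same Weitzenböck bound survives. These are precisely the ingredients of LeBrun's proof, which we are quoting.
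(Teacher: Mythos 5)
Your proposal is correct and follows essentially the same route as the paper: both reduce everything to the positivity of $\inf_g \int_M {\rm Scal}_g^2\, d{\rm vol}_g$, supplied by LeBrun's Seiberg--Witten curvature estimate from \cite{[LeB0]} with Taubes' theorem providing the symplectic input, and then derive $\mathcal{Y}(M)<0$, $Vol_{Scal}(M)>0$, and the non-existence of $\mathcal{F}$-structures (via Theorem \ref{Theorem PP}) exactly as you do. Your write-up is in fact slightly more careful than the paper's on the non-minimal case (tracking $c_1^2(M_{min})$ rather than $c_1^2(M)$) and on the $b_+=1$ chamber issue, but these refinements are all part of the quoted LeBrun argument rather than a different method.
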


Let $\mathcal{M}_{Scal}$ be the set of Riemannian metrics on a closed 4-manifold that satisfy $Scal \geq - 12$. LeBrun defined the scalar minimal volume of $(M, g)$ as \begin{equation}Vol_{Scal}(M): = \underset{g \in \mathcal{M}_{Scal}}{inf} \int_M d vol_g.\end{equation}

\begin{proof} The claim regarding the Yamabe invariant follows from the inequalities \begin{equation} \mathcal{Y}(M) = - \underset{\gamma}{inf}|\mathcal{Y}(M, \gamma)| = -\underset{g}{inf}\left[\int_M Scal_g^2 dvol_g \right]^{\frac{1}{2}} \leq - 4\pi \sqrt{2c_1^2(M)} < 0 \end{equation}  that were studied by LeBrun. Such a symplectic manifold has a non-zero Seiberg-Witten invariant for every Riemannian metric \cite{[LeB0]}, and every conformal class has negative Yamabe constant. This implies the first equality from left to right, and that the Yamabe invariant of symplectic 4-manifolds with Kodaira dimension two is negative. The second inequality from right to left follows from \cite[Theorem 4]{[LeB0]}. It is here where the symplectic canonical bundle is used. Since the Yamabe invariant of a 4-manifold that admits an $\mathcal{F}$-structure is non-negative by Theorem \ref{Theorem PP}, it follows that symplectic 4-manifolds of Kodaira dimension two do not admit $\mathcal{F}$-structures. 

Regarding the positivity of the scalar minimal volume, we have the following chain of inequalities
\begin{equation} Vol_{Scal}(M) = \underset{g\in \mathcal{M}}{inf} \frac{(min Scal_g)^2}{144} \int_M dvol_g =  \underset{\gamma\in \mathcal{C}}{inf}\underset{g\in \gamma}{inf} \frac{(min Scal_g)^2}{144} \int_M dvol_g =\end{equation} \begin{equation} =  \underset{\gamma\in \mathcal{C}}{inf}\underset{g\in \gamma}{inf} \frac{1}{144} \int_M Scal_g^2 dvol_g = \underset{g\in \mathcal{M}}{inf}\frac{1}{144} \int_M Scal_g^2 dvol_g = \underset{g\in \mathcal{M}}{inf}\frac{|\mathcal{Y}(M, \gamma)|^2}{144} > 0.\end{equation} The set of conformal classes of Riemmanian metrics on $M$ is denoted by $\mathcal{C}$, and $\mathcal{M}$ denotes the set of Riemannian metrics. The first equality from left to right follows from rescaling properties. The equality going from equation (29) to equation (30) follows from \cite[Lemma 2]{[LeB0]}, since the non-vanishing of the Seiberg-Witten invariant implies that every conformal class has negative Yamabe constant, and therefore the hypothesis of the Lemma are satisfied. The last equality follows from \cite[Lemma 1]{[LeB0]}.

\end{proof}

Since the inequalities\begin{equation} Vol_{|K|}(M) \geq Vol_K(M) \geq Vol_{Ric}(M) \geq Vol_{Scal}(M), \end{equation} hold, where $K$ is the sectional curvature, and Ric is the Ricci curvature, Gromov's minimal volume $Vol_{|K|}(M)$ is positive for symplectic 4-manifolds of Kodaira dimension two.

\end{document}